\documentclass{article} 

\usepackage{enumerate,enumitem}
\usepackage{amssymb}
\usepackage{amsmath}
\usepackage{amsthm}
\usepackage{color}
\usepackage[all,pdf,cmtip]{xy}
\usepackage{titlefoot}

\title{Transfunctions and their connections to Plans, Markov Operators and Optimal Transport \\ ~\\ Preprint}
\author{Jason Bentley$^\dag$ and Piotr Mikusi\'{n}ski\\
Department of Mathematics\\
University of Central Florida\\
Orlando, Florida, USA}

\newcommand{\mc}[1]{\mathcal{#1}}
\newcommand{\mb}[1]{\mathbb{#1}}

\newcommand{\eps}{\varepsilon}

\newtheorem{theorem}{Theorem}[section]
\newtheorem{lemma}[theorem]{Lemma}
\newtheorem{proposition}[theorem]{Proposition}

\theoremstyle{definition}
\newtheorem{definition}[theorem]{Definition}
\newtheorem{example}[theorem]{Example}

\begin{document}
\maketitle

\unmarkedfntext{$^\dag$ Corresponding author.}
\unmarkedfntext{2010 \textit{Mathematics Subject Classification.} Primary 28D05, 28A33; Secondary 28C99.}
\unmarkedfntext{\textit{Key words and phrases.} Generalized function, transfunction, Markov operator, plan, Radon adjoint.}

\section*{Abstract}
A transfunction is a function which maps between sets of finite measures on measurable spaces. In this paper we characterize transfunctions that correspond to Markov operators and to plans; such a transfunction will contain the ``instructions" common to several Markov operators and plans. We also define the adjoint of transfunctions in two settings and provide conditions for existence of adjoints. Finally, we develop approximations of identity in each setting and use them to approximate weakly-continuous transfunctions with simple transfunctions; one of these results can be applied to some optimal transport problems to approximate the optimal cost with simple Markov transfunctions.

\pagebreak

\section{Introduction}

Let $\mc{M}_X$ and $\mc{M}_Y$ be vector spaces of finite signed measures defined on measurable spaces $(X,\Sigma_X)$ and $(Y, \Sigma_Y)$, respectively. For finite positive measure $\mu$ and real-valued function $f \in \mc{L}^1(X,\mu)$, let $f \mu$ denote the measure $A \mapsto \int_A f ~d\mu$
and define $$
\mc{M}^{p,+}_\mu = \{ f \mu : f \in \mc{L}^p(X,\mu),~ f \ge 0\} \quad \text{and } \quad \mc{M}^p_\mu := \{f \mu : f \in \mc{L}^p(X,\mu)\},
$$
for $p \in [1,\infty]$. We define $\mc{M}_\mu$ to be the set of all signed measures absolutely continuous with respect to $\mu$. By the Radon-Nikodym Theorem, $\mc{M}_\mu = \mc{M}_\mu^1$. Similarly, we define $\mc{M}^{p,+}_\nu$ and $\mc{M}^p_\nu$.

A \textit{transfunction} is any function $\Phi : \mathcal{M}_X \to \mathcal{M}_Y$, \cite{Mikusinski}. Strongly $\sigma$-additive transfunctions are those which are linear and continuous with respect to total variation. We will sometimes call an operator between Banach spaces \emph{strongly $\sigma$-additive} if it is linear and norm-continuous.

Plans have applications for finding weak solutions for optimal transport problems, \cite{Villani}. Markov operators, defined in Section 2, have some similarities to stochastic matrices, \cite{Brown}. Plans and Markov operators have a bijective correspondence as described in \cite{Taylor} and in Section 2. We assign to any corresponding Markov operator/plan pair $(T,\kappa)$ with marginals $\mu,\nu$ a unique transfunction $\Phi: \mc{M}_\mu \to \mc{M}_\nu$ -- called a \emph{Markov transfunction}. However, each Markov transfunction corresponds to a family of Markov operators (resp. plans) which have different marginals but follow the same ``instructions''. $\Phi$, $T$, and $\kappa$ are related via the equalities
$$
\Phi(1_A \mu)(B) = \int_B T(1_A) ~d\nu = \kappa(A \times B),
$$ 
$$
\int_Y g ~d\Phi(f \mu) = \int_Y T(f) ~d(g \nu) = \int_{X \times Y} (f \otimes g) ~d\kappa 
$$
which hold for all $A\subseteq X$, $B\subseteq Y$, $f \in \mc{L}^\infty(X)$, and $g \in \mc{L}^\infty(Y)$. The first set of equalities, although simpler, imply the second set of equalities by strong $\sigma$-additivity of $\Phi$, bounded-linearity of $T$, and $\sigma$-additivity of $\kappa$.

In our investigation of transfunctions we are motivated by the theory developed for the Monge-Kantorovich transportation problems and their  far-reaching outcomes; see \cite{Ambrosio}, \cite{Campi}, \cite{Guerra}, and \cite{Villani}.

Let $\mc{F}_X$ and $\mc{F}_Y$ be spaces of measurable functions which are integrable by measures in $\mc{M}_X$ and $\mc{M}_Y$, respectively. If $\{\mc{F}_X, \mc{M}_X\}$ and $\{\mc{F}_Y, \mc{M}_Y\}$ are separating pairs with respect to integration as defined in Section 3, then we define the \emph{Radon adjoint} of $\Phi: \mc{M}_X \to \mc{M}_Y$ (if it exists) to be the unique linear bounded operator $\Phi^\ast: \mc{F}_Y \to \mc{F}_X$ such that $$\int_X \Phi^\ast(g) ~d\mu = \int_Y g ~d\Phi(\lambda)$$ for all $g \in \mc{F}_Y$ and $\lambda \in \mc{M}_X$. 

If $X$ and $Y$ are second-countable locally compact Hausdorff spaces, if $\mc{F}_X$ and $\mc{F}_Y$ are Banach spaces of bounded continuous functions (uniform norm) and if $\mc{M}_X$ and $\mc{M}_Y$ are Banach spaces of finite regular signed measures (total variation), then any strongly $\sigma$-additive weakly-continuous transfunction $\Phi : \mc{M}_X \to \mc{M}_Y$ has an adjoint $\Phi^\ast$ which is a linear, uniformly-continuous and bounded-pointwise-continuous operator (and vice versa) such that $||\Phi|| = ||\Phi^\ast||$. In future research, we wish to develop functional analysis on transfunctions, and adjoints may be utilized to this end. In contexts where operators on functions are more appropriate or preferable, the adjoint may prove crucial.

A simple transfunction $\Phi: \mc{M}_X \to \mc{M}_Y$ is one which has the form
$$\Phi(\lambda) := \sum_{i=1}^m \langle f_i, \lambda \rangle \rho_i$$
for $f_1, \dots, f_m \in \mc{F}_X$ and $\rho_1,\dots, \rho_m \in \mc{M}_Y$, where $\langle f_i, \lambda \rangle := \int_X f_i ~d\lambda$. Simple transfunctions are weakly-continuous and strongly $\sigma$-additive. When working with locally compact Polish (metric) spaces, simple Markov transfunctions have two advantages: they weakly approximate all Markov transfunctions, and a subclass of them can be utilized to approximate the optimal cost between two marginals with respect to a transport cost $c(x,y)$ that is bounded by $\alpha d(x,y)^p$ for constants $\alpha, p > 0$. 

In \cite{JBPM1} the notions of localization of a transfunction and the graph of a transfunction are introduced and studied. They give us an insight into which transfunctions arise from continuous functions or measurable functions or are close to such functions.

\section{Markov Transfunctions}

In this section, we describe a class of transfunctions in which each transfunction corresponds to a family of plans and a family of Markov operators. First, we introduce these concepts. All measurable or continuous functions shall be real-valued in this text. Note that the following definitions allow for all finite positive measures rather than all probability measures.

\begin{definition}
Let $\mu$ and $\nu$ be finite positive measures on $(X,\Sigma_X)$ and $(Y,\Sigma_Y)$ respectively with $||\mu||=||\nu||$. Let $\kappa$ be a finite positive measure on the product measurable space $(X\times Y, \Sigma_{X \times Y})$. We say that $\kappa$ is a \emph{plan with marginals $\mu$ and $\nu$} if $\kappa(A \times Y) = \mu(A)$ and $\kappa(X \times B) = \nu(B)$ for all $A \in \Sigma_X$ and $B \in \Sigma_Y$. We define $\Pi(\mu,\nu)$ to be the set of all plans with marginals $\mu$ and $\nu$.
\end{definition}

If random variables $X,Y$ have laws $\mu,\nu$, then any coupling of $X,Y$ has a law $\kappa$ which is a plan in $\Pi(\mu,\nu)$.

\begin{definition}\label{def_Markov_operator}
Let $\mu$ and $\nu$ be finite positive measures on $(X,\Sigma_X)$ and $(Y,\Sigma_Y)$ respectively with $||\mu||=||\nu||$, and let $p \in [1,\infty]$. 
We say that a map $T : \mc{L}^p(X,\mu) \to \mc{L}^p(Y,\nu)$ is a \emph{Markov operator}  if:
\begin{enumerate}[label=(\roman*)]
\item $T$ is linear with $T 1_X = 1_Y$;
\item $f \ge 0$ implies $Tf \ge 0$ for all $f \in \mc{L}^p(X,\mu)$;
\item $\int_X f ~d\mu = \int_Y Tf ~d\nu$ for all $f \in \mc{L}^p(X,\mu)$.
\end{enumerate}
\end{definition}

Notice that the definition of Markov operators depends on underlying measures $\mu$ and $\nu$ on $X$ and $Y$ respectively, even when $p=\infty$. We now define some properties for transfunctions that are analogous to (ii) and (iii) from Definition \ref{def_Markov_operator}.

\begin{definition}
Let $\Phi: \mc{M}_X \to \mc{M}_Y$ be a transfunction.
\begin{enumerate}[label=(\roman*)]
\item $\Phi$ is \textit{positive} if $\lambda \ge 0$ implies that $\Phi \lambda \ge 0$ for all $\lambda \in \mc{M}_X$.
\item $\Phi$ is \textit{measure-preserving} if $(\Phi\lambda)(Y)=\lambda(X)$ for all $\lambda \in \mc{M}_X$. 
\item $\Phi$ is \textit{Markov} if it is strongly $\sigma$-additive, positive and measure-preserving. 
\end{enumerate} 
\end{definition}

By \cite{Taylor}, there is a bijective relationship between plans and Markov operators. We will show soon that a relationship between Markov operators and Markov transfunctions exists, which will imply that all three concepts are connected. 

\begin{lemma}\label{lem_linear_bijection}
Let $\mu$ be a finite positive measure on $(X,\Sigma_X)$. Define $J_\mu: \mc{L}^1(X,\mu) \to \mc{M}^1_\mu$ via $J_\mu f = f \mu$. Then $J_\mu$ (hence $J_\mu^{-1}$) is a positive linear isometry.

\end{lemma}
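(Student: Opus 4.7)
The plan is to verify the four properties—linearity, positivity, isometry, and bijectivity—separately, each of which reduces to a standard property of Lebesgue integration. Once these are established for $J_\mu$, the same properties of $J_\mu^{-1}$ follow automatically (a bijective linear isometry has a linear isometric inverse, and positivity transfers because $J_\mu$ preserves the cone $\{f \ge 0\}$).

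First I would note linearity: for $f,g \in \mc{L}^1(X,\mu)$ and $a,b \in \mb{R}$, the measure $J_\mu(af+bg)$ sends $A$ to $\int_A (af+bg)\,d\mu = a\int_A f\,d\mu + b\int_A g\,d\mu$, which is precisely $(aJ_\mu f + bJ_\mu g)(A)$. Positivity is equally immediate: if $f \ge 0$ $\mu$-a.e., then $(f\mu)(A) = \int_A f\,d\mu \ge 0$ for every $A \in \Sigma_X$, so $J_\mu f \ge 0$.

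The main content is the isometry property, $\|J_\mu f\|_{\mathrm{TV}} = \|f\|_{\mc{L}^1(\mu)}$. Here I would use the Jordan decomposition: writing $f = f^+ - f^-$, the two positive measures $f^+\mu$ and $f^-\mu$ are mutually singular because they are concentrated on the disjoint sets $\{f \ge 0\}$ and $\{f < 0\}$. Hence this decomposition agrees with the Jordan decomposition of $f\mu$, so $|f\mu| = (f^+ + f^-)\mu = |f|\mu$ and consequently $\|f\mu\|_{\mathrm{TV}} = \int_X |f|\,d\mu = \|f\|_{\mc{L}^1(\mu)}$. This also shows $J_\mu$ is injective (if $\|J_\mu f\| = 0$ then $f = 0$ in $\mc{L}^1$), and surjectivity onto $\mc{M}^1_\mu$ is immediate from the definition of that set.

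The only mild subtlety is the standard identification of $\mc{L}^1(X,\mu)$-functions up to $\mu$-a.e. equality: two representatives of the same class yield the same measure, so $J_\mu$ is well-defined, and the injectivity argument above is really the statement that $f = g$ in $\mc{L}^1(\mu)$ whenever $f\mu = g\mu$. This is the only place one must be careful about the equivalence-class convention; otherwise the proof is a direct application of linearity of the integral and the Jordan decomposition.
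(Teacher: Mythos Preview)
Your proof is correct and follows essentially the same route as the paper: linearity and positivity from the corresponding properties of the integral, and the isometry via the decomposition $f=f^+-f^-$, using mutual singularity of $f^+\mu$ and $f^-\mu$ to compute $\|f\mu\|=\int_X|f|\,d\mu$. One small point of contrast: the paper attributes surjectivity to the Radon--Nikodym Theorem, whereas you (correctly, given that the codomain $\mc{M}^1_\mu$ is \emph{defined} as $\{f\mu:f\in\mc{L}^1(X,\mu)\}$) observe that it is tautological; the Radon--Nikodym content is really the identification $\mc{M}_\mu=\mc{M}^1_\mu$ mentioned earlier in the paper.
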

\begin{proof}
Positivity and linearity of integrals with respect to $\mu$ ensure that $J_\mu$ is positive and linear. Surjectivity of $J_\mu$ is the statement of the Radon-Nikodym Theorem. Injectivity and isometry hold because 
$$||J_\mu f|| = ||J_\mu(f^+) - J_\mu(f^-)|| = \int_X f^+ d\mu + \int_X f^- d\mu = \int_X |f| d\mu = ||f||_1.$$
\end{proof}
\begin{theorem}\label{thm_Markov_transfunction_char}
Let $\mu$ and $\nu$ be finite positive measures on $X$ and $Y$ respectively, with $||\mu|| = ||\nu||$ and let $s \in [1,\infty]$. For every Markov operator $T: \mc{L}^s(X,\mu) \to \mc{L}^s(Y,\nu)$, there exists a unique Markov transfunction $\Phi: \mc{M}^s_\mu \to \mc{M}^s_\nu$ such that
$$
\int_B T(1_A) d\nu = \Phi(1_A \mu)(B)
$$ 
for all $A \in \Sigma_X$ and $B \in \Sigma_Y$.

Every Markov transfunction $\Phi: \mc{M}^s_\mu \to \mc{M}^s_\nu$ corresponds to a family of Markov operators $\{T_{\lambda,\rho} : \mc{L}^\infty(X,\lambda) \to \mc{L}^\infty(Y,\rho) ~|~ \lambda \in \mc{M}^{s,+}_\mu ~,~ \rho = \Phi\lambda\}$ which satisfies
$$
\int_B T_{\lambda,\rho}(1_A) d\rho = \Phi(1_A \lambda)(B)
$$
for all $A \in \Sigma_X$ and $B \in \Sigma_Y$.
\end{theorem}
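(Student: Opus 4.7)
The plan is to treat the two directions separately, using Lemma~\ref{lem_linear_bijection} as a bridge between $\mathcal{L}^s$-spaces and $\mathcal{M}^s$-spaces, and Radon-Nikodym for the reverse construction.

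For the first half, given $T$ I would simply define $\Phi := J_\nu \circ T \circ J_\mu^{-1}$, i.e.\ $\Phi(f\mu) := (Tf)\,\nu$ on $\mathcal{M}^s_\mu$. Linearity, positivity, and measure-preservation transfer directly from the corresponding properties of $T$ and the (positive, linear) $J$'s: for positivity use that $f\mu\ge 0$ forces $f\ge 0$ $\mu$-a.e., and for measure preservation use Definition~\ref{def_Markov_operator}(iii) applied to $f$. For strong $\sigma$-additivity only continuity in total variation is nontrivial. For this I would first show that any Markov operator $T$ is an $L^1$-contraction on $\mathcal{L}^s$: for $f\ge 0$, $\|Tf\|_1 = \int Tf\,d\nu = \int f\,d\mu = \|f\|_1$ by (iii) and positivity, and the general case follows by splitting $f = f^+ - f^-$. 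This gives $\|\Phi\lambda\|_{TV} \le \|\lambda\|_{TV}$. The defining identity $\int_B T(1_A)\,d\nu = \Phi(1_A\mu)(B)$ is then immediate, and uniqueness comes from noting that two strongly $\sigma$-additive transfunctions agreeing on $\{1_A\mu\}$ agree on simple measures by linearity, and on all of $\mathcal{M}^s_\mu$ by total-variation density of simple functions in $\mathcal{L}^1(\mu)$ together with TV-continuity.

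For the second half, fix a Markov transfunction $\Phi$, pick $\lambda \in \mathcal{M}^{s,+}_\mu$, and set $\rho = \Phi\lambda$. For any $\phi \in \mathcal{L}^\infty(X,\lambda)$, one has $\phi\lambda \in \mathcal{M}^s_\mu$ (since $\phi\cdot d\lambda/d\mu \in \mathcal{L}^s(\mu)$), so $\Phi(\phi\lambda)$ is defined. The crucial observation is that positivity of $\Phi$ gives $|\Phi(\phi\lambda)| \le \|\phi\|_\infty\,\Phi\lambda = \|\phi\|_\infty\,\rho$, so $\Phi(\phi\lambda)\ll\rho$. Define
$$T_{\lambda,\rho}(\phi) := \frac{d\Phi(\phi\lambda)}{d\rho} \in \mathcal{L}^\infty(Y,\rho).$$
Linearity of $T_{\lambda,\rho}$ follows from linearity of $\Phi$ and uniqueness of Radon-Nikodym derivatives; $T_{\lambda,\rho}(1_X) = 1_Y$ holds because $\Phi(\lambda) = \rho$; positivity is inherited from that of $\Phi$; and the integral identity in Definition~\ref{def_Markov_operator}(iii) is precisely the measure-preserving property $\Phi(\phi\lambda)(Y) = (\phi\lambda)(X)$. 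The defining relation $\int_B T_{\lambda,\rho}(1_A)\,d\rho = \Phi(1_A\lambda)(B)$ is then a restatement of the Radon-Nikodym identity.

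The bookkeeping in both directions is routine once the right candidate is written down; I expect the main subtlety to lie in the second half, specifically in justifying that $T_{\lambda,\rho}$ is genuinely $\mathcal{L}^\infty$-valued with $\|T_{\lambda,\rho}\phi\|_\infty \le \|\phi\|_\infty$ \emph{$\rho$-a.e.\ and simultaneously for all $\phi$}, rather than merely on a $\phi$-dependent conull set. Positivity combined with the pointwise inequality $|T_{\lambda,\rho}\phi| \le T_{\lambda,\rho}(|\phi|) \le \|\phi\|_\infty\cdot T_{\lambda,\rho}(1_X) = \|\phi\|_\infty$ resolves this, but it relies on choosing Radon-Nikodym representatives coherently, which is the only point that requires a careful argument.
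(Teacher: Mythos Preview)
Your proposal is correct and the core construction matches the paper's exactly: both directions use the conjugations $\Phi = J_\nu \,T\, J_\mu^{-1}$ and $T_{\lambda,\rho} = J_\rho^{-1}\,\Phi\, J_\lambda$ (your Radon--Nikodym formulation of the latter is precisely what $J_\rho^{-1}$ does, by Lemma~\ref{lem_linear_bijection}). The one substantive difference is in handling $s>1$ for the first direction: the paper invokes an external result (Theorem~1 of \cite{Taylor}) to extend $T$ to a Markov operator on $\mathcal{L}^1$, builds $\widehat\Phi$ there, and then restricts to $\mathcal{M}^s_\mu$, whereas you work directly on $\mathcal{M}^s_\mu$ by observing that any Markov operator is an $\mathcal{L}^1$-contraction on $\mathcal{L}^s$ --- your route is more self-contained and avoids the citation. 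Your explicit attention to the $\mathcal{L}^\infty$-valuedness of $T_{\lambda,\rho}$ (via $|T_{\lambda,\rho}\phi| \le \|\phi\|_\infty\, T_{\lambda,\rho}(1_X)$) also makes precise a point the paper leaves implicit when it simply writes ``$J_\rho^{-1}\,\Phi\, J_\lambda$ with domain $\mathcal{L}^\infty(X,\lambda)$''.
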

\begin{proof}

We prove the first statements for $s=1$, then extend the argument to other values of $s$. Let $T: \mc{L}^1(X,\mu) \to \mc{L}^1(Y,\nu)$ be a Markov operator. Define $\Phi = J_\nu  ~T~ J_\mu^{-1}$. Since all three operators in the definition of $\Phi$ are positive and strongly $\sigma$-additive, we see that $\Phi$ is also positive and strongly $\sigma$-additive. Next, if $\lambda \in \mc{M}_\mu$, then 
$$
(\Phi \lambda) (Y) = J_\nu (T J_\mu^{-1} \lambda) (Y) = \int_Y T(J_\mu^{-1} \lambda) d\nu = \int_X J_\mu^{-1} (\lambda) d\mu = \lambda(X)
$$
by the definitions of isometries $J_\mu^{-1}$ and $J_\nu$, and by property (iii) of $T$, so $\Phi$ is measure-preserving. Finally, notice that 
$$
\Phi(1_A \mu) (B) = J_\nu T (J_\mu^{-1} (1_A \mu))(B) = J_\nu (T 1_A) (B) = \int_B T(1_A) d\nu
$$ 
for all $A \in \Sigma_X$ and $B \in \Sigma_Y$, hence the relation holds.

Now let $s \in (1,\infty]$ and let $T: \mc{L}^s(X,\mu) \to \mc{L}^s(Y,\nu)$ be a Markov operator. By Theorem 1 from \cite{Taylor}, $T$ can be uniquely extended to a Markov operator $\widehat{T}$ on $\mc{L}^1(X,\mu)$. By our previous argument, $\widehat{T}$ corresponds to a Markov transfunction $\widehat{\Phi}$ defined on $\mc{M}_\mu$. We define $\Phi$ to be the restriction of $\widehat{\Phi}$ to $\mc{M}^s_\mu$. The necessary properties are inherited from the previous argument.

Now we prove the second statement. Let $s \in [1,\infty]$, let $\Phi: \mc{M}^s_\mu \to \mc{M}^s_\nu$ be a Markov transfunction, let $\lambda \in \mc{M}^s_\mu$ be positive, and define $\rho := \Phi(\lambda) \in \mc{M}^s_\nu$, which is also positive. Define $T = T_{\lambda,\rho} := J_\rho^{-1} ~\Phi~ J_\lambda$ with domain $\mc{L}^\infty(X,\lambda)$. Then 
$$T(1_X) = J_\rho^{-1} \Phi (J_\lambda (1_X)) = J_\rho^{-1} (\Phi \lambda) = J_\rho^{-1} \rho = 1_Y.$$ Since all three operators in the definition of $T$ are positive and strongly $\sigma$-additive, we see that $T$ is also positive and strongly $\sigma$-additive, satisfying parts (i) and (ii) of Definition \ref{def_Markov_operator}. Next, if $f \in \mc{L}^\infty(X,\lambda)$, then 
$$
\int_Y T f d\rho = \int_Y J_\rho^{-1} (\Phi J_\lambda f) d\rho = (\Phi(J_\lambda f)) (Y) = (J_\lambda f)(X) = \int_X f d\lambda,
$$
so (iii) of Definition \ref{def_Markov_operator} is met. Finally, notice that 
$$
\int_B T(1_A) d\rho = \int_B J_\rho^{-1} (\Phi J_\lambda (1_A)) d\rho = \Phi(J_\lambda (1_A)) (B) = \Phi(1_A \lambda) (B)
$$
for all $A \in \Sigma_X$ and $B \in \Sigma_Y$, so the relation holds.
 
\end{proof}

One consequence from Theorem \ref{thm_Markov_transfunction_char} is that any Markov transfunction defined on $\mc{M}^s_\mu$ for $s \in [1,\infty]$ uniquely extends or restricts  to $\mc{M}^{s'}_\mu$ for all $s' \in [1,\infty]$, thus the value of $s$ is insignificant. This is analogous to a similar property held by Markov operators, as in \cite{Taylor}.

The remainder of this section aims to emphasize the importance of Theorem \ref{thm_Markov_transfunction_char}. For any $p \in [1,\infty]$, a transfunction $\Phi: \mc{M}^p_\mu \to \mc{M}^p_\nu$, a Markov operator $T: \mc{L}^p(X,\mu) \to \mc{L}^p(Y,\nu)$ and a plan $\kappa \in \Pi(\mu,\nu)$ that satisfy the equalities 
$$
\Phi(1_A \mu)(B) = \int_B T(1_A) ~d\nu = \kappa(A \times B)
$$ 
for all $A\subseteq X$ and $B\subseteq Y$ contain the same information (transportation method), but convey it differently. By extending the equalities above for all $f \in \mc{L}^p(X,\mu)$ and $g \in \mc{L}^q(Y,\nu)$ with $1/p + 1/q = 1$, we have
$$\int_Y g ~d\Phi(f \mu) = \int_Y T(f) ~d(g \nu) = \int_{X \times Y} (f \otimes g) ~d\kappa .$$

Note that if some positive measure $\mu'$ also generates $\mc{M}^p_\mu$, and if we define $\nu'=\Phi(\mu')$, then the same transfunction $\Phi: \mc{M}^p_{\mu} \to \mc{M}^p_{\nu}$ corresponds to a Markov operator $T' : \mc{L}^p(X,\mu') \to \mc{L}^p(Y,\nu')$ and it corresponds to a plan $\kappa'$ with marginals $\mu'$ and $\nu'$. Therefore $T$ and $T'$ are different Markov operators, $\kappa$ and $\kappa'$ are different plans, yet they follow the same ``instructions'' encoded by $\Phi$. In this regard, $\Phi$ is a global way to describe a transportation method independent of marginals. If $\mu'$ instead generates a smaller space than $\mc{M}_\mu$, then $\Phi$ restricted to $\mc{M}_{\mu'}$ contains part but not all of the instructions. Regardless, $\Phi$ will be Markov on this restriction. Notably, if $\mu' = h \mu$, then $\Phi: \mc{M}_{\mu'} \to \mc{M}_{\nu'}$ has associated Markov operator $T_h (f) := T (h f)$ and associated plan $\kappa' = (h \otimes 1_Y) \kappa$.

\section{Radon Adjoints of Transfunctions}

Let $(X,\Sigma_X)$ be a Borel measurable space, let $\mc{F}_X$ be a subset of bounded measurable real-valued functions on $X$ and let $\mc{M}_X$ be a subset of finite signed measures on $X$. Analogously, we have $Y$, $\mc{F}_Y$ and $\mc{M}_Y$. For $f \in \mc{F}_X$ and $\lambda \in \mc{M}_X$, define $\langle f,\lambda \rangle := \int_X f ~d\lambda$. Similarly, for $g \in \mc{F}_Y$ and $\rho \in \mc{M}_Y$, define $\langle g, \rho \rangle := \int_Y g ~d\rho$. Occasionally, the elements within angular brackets shall be written in reverse order without confusion: for example, see Definition \ref{def_biseparate}.

We say that $\{\mc{F}_X , \mc{M}_X\}$ \emph{is a separating pair} if $\langle f_1, \lambda \rangle = \langle f_2 , \lambda \rangle$ for all $\lambda \in \mc{M}_X$ implies that $f_1 = f_2$, and if $\langle f, \lambda_1 \rangle = \langle f, \lambda_2 \rangle$ for all $f \in \mc{F}_X$ implies that $\lambda_1 = \lambda_2$.  In this section, we shall develop some theory for two choices of the collections $\{\mc{F}_X, \mc{M}_X\}$ and $\{\mc{F}_Y, \mc{M}_Y\}$, which we call the continuous setting and the measurable setting.

\begin{definition}\label{def_biseparate}
Let $\{\mc{F}_X,\mc{M}_X\}$ and $\{\mc{F}_Y,\mc{M}_Y\}$ be separating pairs. Let $\Phi: \mc{M}_X \to \mc{M}_Y$ be a transfunction and let $S: \mc{F}_Y \to \mc{F}_X$ be a function. Then $\Phi$ and $S$ are \emph{Radon adjoints} of each other if the equation 
$$\int_Y g ~d\Phi(\lambda) = \int_X S(g) ~d\lambda, ~~~~~ \text{i.e.} ~~~~~ \langle g, \Phi(\lambda) \rangle = \langle S(g) , \lambda \rangle$$ holds for all $g \in \mc{F}_Y$ and $\lambda \in \mc{M}_X$. 
\end{definition}

By utilizing the separation properties of $\langle \cdot,\cdot \rangle$, Radon adjoints of both kinds are unique if they exist. We shall denote the Radon adjoint of $\Phi$ by $\Phi^\ast$ and of $S$ by $S^\ast$.

If $(\Phi, S)$ is a Radon adjoint pair, then 
$$\textstyle \langle g, \Phi \sum_i \lambda_i \rangle = \langle Sg, \sum_i \lambda_i \rangle = \sum_i \langle Sg, \lambda_i \rangle = \sum_i \langle g, \Phi \lambda_i \rangle = \langle g, \sum_i \Phi \lambda_i \rangle ,$$ 
meaning that $\Phi$ is strongly $\sigma$-additive. Similarly,  
$$\textstyle \langle S\sum_i g_i, \lambda \rangle = \langle \sum_i g_i, \Phi \lambda \rangle = \sum_i \langle g_i, \Phi \lambda \rangle = \sum_i \langle Sg_i, \lambda \rangle = \langle \sum_i Sg_i, \lambda\rangle ,$$
meaning that $S$ is linear and uniformly-continuous. 
\begin{example}
If $\Phi = f_\#$ (the push-forward operator) for some measurable $f: X \to Y$, then $\Phi^* (g) = g \circ f = f^\ast g$ (the pull-back operator acting on $g$). This is because $\int_Y g ~d(f_\# \lambda) = \int_X g \circ f ~d\lambda$ for all $g \in \mc{F}_Y$, $\lambda \in \mc{M}_X$. 
\end{example}
\begin{example}
If $X=Y$ and $\Phi \lambda := f \lambda$ for some continuous (or measurable) $f: X \to \mb{R}$, then $\Phi^*(g) = g f$. This is because $\int_X g ~d(f\lambda) = \int_X g f ~d\lambda$ for all $g \in \mc{F}_X$, $\lambda \in \mc{M}_x$.
\end{example}

\begin{definition}\label{def_weak_convergence}
~

\begin{enumerate}[label=(\roman*)]
\item $(f_n)$ \emph{weakly converges} to $f$ in $\mc{F}_X$, notated as $f_n \xrightarrow{w} f$, if every finite regular measure $\lambda$ on $X$ yields $\langle f_n, \lambda \rangle \to \langle f, \lambda \rangle$ as $n \to \infty$. 
\item $(\lambda_n)_{n=1}^\infty$ \emph{weakly converges} to $\lambda$ in $\mc{M}_X$, notated as $\lambda_n \xrightarrow{w} \lambda$, if every bounded continuous $f: X \to \mb{R}$ yields $\langle f, \lambda_n \rangle \to \langle f, \lambda \rangle$ as $n \to \infty$. 
\item An operator $S: \mc{F}_Y \to \mc{F}_X$ is \emph{weakly continuous} if $g_n \xrightarrow{w} g$ in $\mc{F}_Y$ implies that $S g_n \xrightarrow{w} S g$ in $\mc{F}_X$. 
\item A transfunction $\Phi: \mc{M}_X \to \mc{M}_Y$ is \emph{weakly continuous} if $\lambda_n \xrightarrow{w} \lambda$ in $\mc{M}_X$ implies that $\Phi \lambda_n \xrightarrow{w} \Phi \lambda$ in $\mc{M}_Y$.
\end{enumerate} 
\end{definition}

%

Note that weak convergence of $(f_n)$ in Definition \ref{def_weak_convergence} (i) is the same notion as bounded-pointwise convergence.

\section{Approximations of Identity}

For the remainder of this paper, let $X$ and $Y$ be locally-compact Polish spaces, and pick any complete metric for each of them when needed. 

\begin{definition}
For a metric space $(X,d)$ with $x \in X$, $A \subseteq X$ and $\delta > 0$, define $B(x;\delta) := \{z \in X: d(x,z) < \delta\}$ to be the \emph{$\delta$-ball around $x$} and define $B(A;\delta) := \cup_{x \in A} B(x;\delta)$ to be the \emph{$\delta$-inflation around $A$}.
\end{definition}

The following two lemmas aid in showing Proposition \ref{prop_density_measures}.

\begin{lemma}\label{lemma_inflate}
Let $(X,d)$ be a locally compact metric space. The positive function $c: X \to (0,\infty]$ defined via 
$$c(x) := \sup \{\delta > 0 : B(x;\delta) \text{ is precompact}\}$$  
is either identically $\infty$ or it is finite and continuous on $X$. It follows that every compact set $K$ has a precompact inflation $B(K;\delta)$ for some $\delta>0$.
\end{lemma}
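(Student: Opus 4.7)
The plan is to first establish that $c(x)>0$ for every $x\in X$, then prove a dichotomy (either $c\equiv\infty$ or $c$ is finite and $1$-Lipschitz), and finally deduce the statement about compact sets from a finite-subcover argument. The only nontrivial ingredient is the triangle inequality, which governs how precompactness of a ball at one point transfers to nearby points.

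For positivity, I would invoke local compactness: each $x\in X$ admits a compact neighborhood, and in a metric space this yields some $\delta>0$ with $\overline{B(x;\delta)}$ compact, so $c(x)\ge\delta>0$. A preliminary observation I would record is that the collection $\{\delta>0 : B(y;\delta) \text{ precompact}\}$ is downward-closed, because a closed subset of a compact set is compact; hence for every $0<\delta<c(y)$ the ball $B(y;\delta)$ is itself precompact.

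Next I would prove the dichotomy. Suppose $c(x_0)=\infty$ for some $x_0\in X$. For arbitrary $y\in X$ and $\delta>0$, the triangle inequality gives the inclusion $B(y;\delta)\subseteq B(x_0;\delta+d(x_0,y))$, and the latter is precompact by assumption, so $B(y;\delta)$ is precompact for every $\delta>0$ and hence $c(y)=\infty$. Otherwise $c$ is finite everywhere, and I claim $|c(x)-c(y)|\le d(x,y)$. Given $x,y\in X$ and any $0<\delta<c(y)$ with $\delta>d(x,y)$, the triangle inequality yields $B(x;\delta-d(x,y))\subseteq B(y;\delta)$; since the right-hand side is precompact by the preliminary observation, so is the left, whence $c(x)\ge\delta-d(x,y)$. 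Letting $\delta\uparrow c(y)$ (and handling the trivial case $d(x,y)\ge c(y)$ by $c(x)>0\ge c(y)-d(x,y)$) gives $c(x)\ge c(y)-d(x,y)$; by symmetry $c$ is $1$-Lipschitz, hence continuous.

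For the consequence, let $K\subseteq X$ be compact. For each $x\in K$ pick $0<\delta_x<c(x)$ so that $B(x;\delta_x)$ is precompact, and extract a finite subcover $K\subseteq\bigcup_{i=1}^n B(x_i;\delta_{x_i}/2)$. Setting $\delta:=\min_i \delta_{x_i}/2>0$, any $z\in B(K;\delta)$ satisfies $d(z,y)<\delta$ for some $y\in K$, and choosing $i$ with $d(y,x_i)<\delta_{x_i}/2$ gives $d(z,x_i)<\delta_{x_i}$, so $B(K;\delta)\subseteq\bigcup_{i=1}^n B(x_i;\delta_{x_i})$, a finite union of precompact sets and therefore precompact. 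The main subtlety to watch is making sure the supremum in the definition of $c$ is handled correctly: specifically, that $\delta<c(y)$ actually implies $B(y;\delta)$ is precompact, which is why the downward-closedness remark is stated up front.
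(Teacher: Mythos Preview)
Your argument is correct and complete: positivity follows from local compactness, the $1$-Lipschitz estimate (via the inclusion $B(x;\delta-d(x,y))\subseteq B(y;\delta)$) gives the dichotomy, and the finite-subcover construction yields the precompact inflation. The paper itself states this lemma without proof, so there is nothing to compare against; your write-up would serve as a suitable proof. One minor remark: having established that $c$ is continuous (or identically $\infty$), you could alternatively derive the consequence by taking $\delta$ less than half the positive minimum of $c$ on $K$ and covering $K$ by finitely many balls of that radius---but your direct compactness argument is equally clean and in fact independent of the first part of the lemma.
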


%

\begin{lemma}\label{lemma_bubbles}
Let $(X,d)$ be a locally compact Polish metric space. Then there exists a pair of sequences $(x_i)_{i=1}^\infty$ from $X$ and $(\beta_i)_{i=1}^\infty$ from $(0,1]$ and there exists a function $p: \mb{N} \to \mb{N}$ such that for all $n \in \mb{N}$, 

$$K_n := \bigcup_{i=1}^{p(n)} \overline{B(x_i, \beta_i / n)}$$ is compact with $K_{n+1} \supseteq K_{n+1}^\circ \supseteq K_n$ and $\cup_{n=1}^\infty K_n = X$.
\end{lemma}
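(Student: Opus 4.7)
The plan is to combine a countable dense sequence with a compact exhaustion of $X$, choosing ball radii via Lemma \ref{lemma_inflate} so that every ball is precompact, and then building $p(n)$ recursively to simultaneously exhaust $X$ and secure the interior-containment condition $K_n \subseteq K_{n+1}^\circ$.

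Since $X$ is a locally compact Polish space, I would start by fixing a countable dense sequence $(x_i)_{i=1}^\infty$ and, using standard $\sigma$-compactness plus local compactness, a compact exhaustion $L_1 \subseteq L_2 \subseteq \cdots$ with $L_n \subseteq L_{n+1}^\circ$ and $\bigcup_n L_n = X$. Let $c: X \to (0,\infty]$ be the function from Lemma \ref{lemma_inflate}, which is everywhere positive and either identically $\infty$ or continuous; either way, $c$ is bounded below by a positive constant on every compact set. Define $\beta_i := \min\{1, c(x_i)/2\} \in (0,1]$, so that $\beta_i/n < c(x_i)$, making each $\overline{B(x_i,\beta_i/n)}$ a closed subset of a compact set, hence compact.

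I would then construct $p(n)$ inductively. Setting $K_0 := \emptyset$, suppose $p(n-1)$ and $K_{n-1}$ are given. Consider the compact set $C_n := K_{n-1} \cup L_n$; I claim that the open balls $\{B(x_i,\beta_i/n) : i \in \mb{N}\}$ cover $C_n$. Granting this, compactness yields a finite subcover, and I let $p(n)$ be the largest index used. Then the open set $U_n := \bigcup_{i=1}^{p(n)} B(x_i,\beta_i/n)$ contains $C_n$, so $K_n$ is a finite union of compact sets (hence compact), $K_{n-1} \subseteq U_n \subseteq K_n^\circ$, and $L_n \subseteq K_n$ forces $\bigcup_n K_n = X$.

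The main obstacle is justifying the covering claim, because while $(x_i)$ is dense, the radii $\beta_i$ may a priori be very small near a given point. This is precisely where continuity of $c$ enters. Fix a compact neighborhood $C_n'$ of $C_n$ with $\eta := \operatorname{dist}(C_n, X \setminus C_n') > 0$; then $c_0 := \inf_{C_n'} c > 0$, so any center $x_j \in C_n'$ satisfies $\beta_j \geq \beta_* := \min\{1, c_0/2\}$. For any $y \in C_n$, density supplies an $x_j$ with $d(y,x_j) < \min\{\eta, \beta_*/n\}$; the first bound forces $x_j \in C_n'$, hence $\beta_j \geq \beta_*$, and the second bound then gives $d(y,x_j) < \beta_j/n$, i.e.\ $y \in B(x_j, \beta_j/n)$. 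This closes the argument; the remaining verifications that $K_n$ has the stated form and the nesting chain is strict in the required topological sense are immediate from the construction.
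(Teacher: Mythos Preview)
The paper states Lemma~\ref{lemma_bubbles} without proof, so there is no argument to compare against directly. Your proposal is correct and is clearly the route the authors set up: both conclusions of Lemma~\ref{lemma_inflate} are invoked exactly where needed---continuity (or identical infiniteness) of $c$ gives the uniform lower bound $\beta_*$ for the radii $\beta_j$ of centers lying in the compact neighborhood $C_n'$, and the precompact-inflation clause is what furnishes $C_n'$ in the first place (take $C_n' := \overline{B(C_n;\delta/2)}$ for the $\delta$ produced by that lemma, which also yields $\eta \ge \delta/2 > 0$). The one place worth a word of care is verifying $x_j \in C_n'$ from $d(y,x_j) < \eta$: if $x_j \notin C_n'$ then $x_j \in X \setminus C_n'$ and hence $d(y,x_j) \ge \operatorname{dist}(C_n, X\setminus C_n') = \eta$, a contradiction. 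The remaining bookkeeping---$K_n$ compact as a finite union of compact closed balls, $K_{n-1} \subseteq U_n \subseteq K_n^\circ$, and $L_n \subseteq K_n$ forcing the exhaustion---is routine as you say.
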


Using the setup from Lemma \ref{lemma_bubbles}, we define the collection of sets $$C_{n,i} := \overline{B(x_i, \beta_i / n)} - \cup_{j<i} \overline{B(x_j, \beta_j / n)}$$ for all $n,i \in \mb{N}$. It follows for any $n \in \mb{N}$ that $\cup_{i=1}^{p(n)} C_{n,i} = K_n$. 
\begin{definition}
A measure $\mu$ is called a \emph{point-mass measure at $x$} if $\mu(A) = 1$ when $x \in A$ and $\mu(A) = 0$ when $x \notin A$. A finite linear combination of point-mass measures is called a \emph{simple measure}. 
\end{definition}

It is straightforward to show that simple measures are regular. The following proposition suggests a method to create approximations of identity, which shall be discussed in their respective sections below.

\begin{proposition}\label{prop_density_measures}
Simple measures on a second-countable locally compact Hausdorff space form a dense subset of all finite regular measures with respect to weak convergence. 
\end{proposition}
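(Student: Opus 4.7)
The plan is to use Lemmas \ref{lemma_inflate} and \ref{lemma_bubbles} to build an explicit approximating sequence. A second-countable locally compact Hausdorff space is metrizable and $\sigma$-compact, hence admits a compatible complete metric, so Lemma \ref{lemma_bubbles} applies and yields increasing compacts $K_n \uparrow X$ together with disjoint cells $C_{n,1}, \dots, C_{n,p(n)}$ partitioning $K_n$, each of diameter at most $2/n$. Because the weak topology on finite regular measures is metrizable in this setting, it suffices, for an arbitrary finite regular $\mu$, to produce a sequence of simple measures converging weakly to $\mu$.

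Given such a $\mu$, I would pick a point $z_{n,i} \in C_{n,i}$ whenever the cell is nonempty and define the simple measure
\[
\mu_n := \sum_{i=1}^{p(n)} \mu(C_{n,i}) \, \delta_{z_{n,i}}.
\]
To check $\mu_n \xrightarrow{w} \mu$, fix a bounded continuous $f$ and $\varepsilon > 0$. By regularity, choose a compact $K$ with $\mu(X \setminus K) < \varepsilon$; by Lemma \ref{lemma_inflate}, pick $\delta > 0$ so that $K' := \overline{B(K;\delta)}$ is compact; by compactness, pick $\eta > 0$ so that $f$ varies by less than $\varepsilon$ on any pair of points in $K'$ at distance less than $\eta$. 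For all sufficiently large $n$ we have $K_n \supseteq K$ and $2/n < \min(\delta, \eta)$.

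With these choices,
\[
\int_X f \, d\mu_n - \int_X f \, d\mu = \sum_{i=1}^{p(n)} \int_{C_{n,i}} \bigl(f(z_{n,i}) - f(y)\bigr) \, d\mu(y) \; - \int_{X \setminus K_n} f \, d\mu.
\]
Split the sum into \emph{good} cells meeting $K$, which then sit inside $K'$ (by the choice of $\delta$ and $2/n < \delta$), so their integrands are bounded by $\varepsilon$ via uniform continuity, and \emph{bad} cells avoiding $K$, whose total $\mu$-mass is at most $\mu(X \setminus K) < \varepsilon$. Together with $|\int_{X \setminus K_n} f\, d\mu| \le \|f\|_\infty \, \mu(X \setminus K)$, this gives an estimate of order $\varepsilon(\mu(X) + \|f\|_\infty)$, completing the weak-convergence argument.

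The main technical obstacle is the coordination in the last step: the uniform continuity modulus of $f$ depends on the compact set on which one works, while the cell diameters and the excluded mass $\mu(X \setminus K_n)$ shrink with $n$. The inflation lemma is what saves the argument, by packaging all cells meeting $K$ into one fixed compact neighborhood $K'$, so that a single modulus $\eta$ governs the estimate uniformly in $i$ for all large $n$.
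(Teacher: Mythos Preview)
Your proof is correct and follows essentially the same route as the paper's: both build the simple approximants $\sum_i \mu(C_{n,i})\delta_{\text{point}}$ from the cells of Lemma~\ref{lemma_bubbles}, then control $\int f\,d(\mu-\mu_n)$ by passing to a compact inflation via Lemma~\ref{lemma_inflate} and invoking uniform continuity there, while bounding the remaining mass by regularity. The only cosmetic differences are that the paper uses the fixed centers $x_i$ rather than cell-chosen points, takes its compact $K$ to be one of the $K_M$, and organizes the final estimate as a three-term telescoping sum through an auxiliary measure $\rho_{n,M}$ rather than your good-cell/bad-cell split; it also remarks explicitly that the signed case follows by Jordan decomposition, which you should mention as well.
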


\begin{proof}

Construct sequences $(x_i)_{i=1}^\infty$, $(\beta_i)_{i=1}^\infty$, $p: \mb{N} \to \mb{N}$ and $(C_{n,i})$ via Lemma \ref{lemma_bubbles}. Fix some positive measure $\lambda \in \mc{M}^+_X$. Construct a sequence $(\lambda_n)_{n=1}^\infty$ of positive simple measures via $\lambda_n := \sum_{i=1}^{p(n)} \lambda(C_{n,i}) \delta_{x_i}$. We will show that $\lambda_n \xrightarrow{w} \lambda$. In doing so, we fix some function $f \in \mc{C}_b(X)$ and show that $\langle f, \lambda_n \rangle \to \langle f, \lambda \rangle$. For density of signed measures, one utilizes the Jordan decomposition and applies a similar argument for each component.

Let $\eps > 0$. Define $\eta := \eps / (3||f|| + 3||\lambda|| + 1)$ so that $||f|| ~ \eta < \eps/3$ and that $||\lambda|| ~ \eta < \eps/3$. Choose some natural $M$ such that $\lambda(K_M^c) < \eta$. Apply Lemma \ref{lemma_inflate} to obtain some $\alpha > 0$ with $L := \overline{B(K_M;\alpha)}$ being compact. By uniform continuity of $f|_L$, choose some natural $N > M$ such that $2/N < \alpha$ and for all $x \in L$, $f(B(x;2/N) \cap L) \subseteq B(f(x); \eta).$

Now let $n > N$. Define $\rho_{n,M} := \sum_{i=1}^{p(n)} \lambda(C_{n,i} \cap K_M) \delta_{x_i}$. Notice that $C_{n,i} \cap K_M \not= \varnothing$ implies that $x_i \in B(K_M;1/n)$ and that $ C_{n,i} \subseteq  B(K_M;2/n) \subseteq L$, resulting in $f(C_{n,i}) \subseteq B(f(x_i);\eta)$. Three observations can be made: \begin{enumerate}
\item[(a)] $|\langle f, \lambda - 1_{K_M} \lambda\rangle| \le ||f|| \cdot \lambda(K_M^c) < ||f|| ~ \eta;$ 
\item[(b)] $|\langle f, 1_{K_M} \lambda - \rho_{n,M} \rangle| \le \left| \int_{K_M} f ~d\lambda - \sum_{i=1}^{p(n)} f(x_i) \lambda(C_{n,i} \cap K_M)\right| < ||\lambda||~\eta;$ 
\item[(c)] $|\langle f, \rho_{n,M} - \lambda_n \rangle| \le ||f||  \sum_{i=1}^{p(n)} \lambda(C_{n,i} \cap K_M^c) \le ||f||~\lambda(K_M^c) < ||f|| ~ \eta.$
\end{enumerate}
Therefore, $|\langle f, \lambda - \lambda_n \rangle| < 3 (\eps/3) = \eps$. 
\end{proof}

For any finite signed measure $\lambda$ on $X$, the sequence $(\lambda_n)$ of simple measures from Proposition \ref{prop_density_measures} weakly converges to $\lambda$, hence the sequence of transfunctions $(I_n)$ given by $I_n: \lambda \mapsto \lambda_n = \sum_{i=1}^{p(n)} \langle 1_{C_{n,i}} , \lambda \rangle \delta_{x_i}$ is an approximation of identity. 

The approximation of identity above is simply described with characteristic functions $(1_{C_{n,i}})$ and point-mass measures $(\delta_{x_i})$. However, in each of the two settings below, either the characteristic functions must be replaced by bounded continuous functions or the point-mass measures must be replaced by compactly-supported measures that are absolutely continuous with respect to some underlying measure. With the correct choice of replacements, the same argument as given in Proposition \ref{prop_density_measures} can be applied, yielding valid approximations of identities for the respective settings.

\section{Results in the Continuous Setting:\\ $\mc{F}_X = \mc{C}_b(X) ~,~ \mc{M}_X = \mc{M}_{fr}(X)$}
Let $\mc{F}_X = \mc{C}_b(X)$ denote the Banach space of all bounded continuous functions on $X$ with the uniform norm and let $\mc{M}_X=\mc{M}_{fr}(X)$ denote the Banach space of all finite (hence, regular) signed measures on $X$ with the total variation norm. Develop $Y$, $\mc{F}_Y$, and $\mc{M}_Y$ analogously. It is known that $\{\mc{F}_X, \mc{M}_X\}$ is a separating pair in this setting.

An approximation of identity can be formed in this setting: keep the point-mass measures $\rho_{n,i} := \delta_{x_i}$, then for each natural $n$, replace the characteristic functions  $\{1_{C_{n,i}} : 1 \le i \le p(n)\}$ used in Proposition \ref{prop_density_measures} with positive compactly supported continuous functions $\{f_{n,i} : 1 \le i \le p(n)\}$ such that $f_{n,i} \le 1_{B(C_{n,i}; 1/n)}$ and that $1_{K_n} \le \sum_{i=1}^{p(n)} f_{n,i} \le 1_{B(K_n; 1/n)}$. Then an approximation of identity in the continuous setting is given by the sequence $(I_n)$, where 
$$I_n: \lambda \mapsto\sum_{i=1}^{p(n)} \langle f_{n,i} , \lambda \rangle ~\rho_{n,i} = \sum_{i=1}^{p(n)} \langle f_{n,i} , \lambda \rangle ~\delta_{x_i}.$$

\begin{theorem}\label{thm_Radon_adjoint_char}
Every strongly $\sigma$-additive and weakly-continuous transfunction $\Phi: \mc{M}_{fr}(X) \to \mc{M}_{fr}(Y)$ has a strongly $\sigma$-additive and weakly-continuous Radon adjoint $S: \mc{C}_b(Y) \to \mc{C}_b(X)$. Conversely, every strongly $\sigma$-additive and weakly-continuous operator $S: \mc{C}_b(Y) \to \mc{C}_b(X)$ has a strongly $\sigma$-additive and weakly-continuous Radon adjoint $\Phi: \mc{M}_{fr}(X) \to \mc{M}_{fr}(Y)$. When the Radon adjoint pair exists, their operator norms equal (with respect to total-variation and uniform-convergence).
\end{theorem}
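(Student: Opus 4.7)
The plan is to prove both implications by explicit construction and then extract the norm identity as a byproduct. For the forward direction, given a strongly $\sigma$-additive weakly-continuous $\Phi$, define $S: \mc{C}_b(Y) \to \mc{C}_b(X)$ pointwise by $S(g)(x) := \langle g, \Phi(\delta_x)\rangle$. Boundedness $\|S(g)\|_\infty \le \|\Phi\|\,\|g\|_\infty$ follows from $\|\delta_x\|=1$ together with norm continuity of $\Phi$. Continuity of $S(g)$ uses that $x_n \to x$ forces $\delta_{x_n} \xrightarrow{w} \delta_x$; by weak continuity of $\Phi$, $\Phi(\delta_{x_n}) \xrightarrow{w} \Phi(\delta_x)$, and since $g$ is bounded continuous, $S(g)(x_n) \to S(g)(x)$.

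The adjoint equation $\langle g, \Phi(\lambda)\rangle = \langle S(g),\lambda\rangle$ holds by construction for $\lambda=\delta_x$, hence by linearity for every simple measure. Proposition \ref{prop_density_measures} then supplies simple measures $\lambda_n \xrightarrow{w}\lambda$ for arbitrary $\lambda \in \mc{M}_{fr}(X)$; the left side is continuous along this sequence by weak continuity of $\Phi$ and the right side by $S(g)\in\mc{C}_b(X)$, so the equation extends to all of $\mc{M}_{fr}(X)$. Linearity and $\|S\|\le\|\Phi\|$ are immediate from the definition, and weak continuity of $S$ comes from the dual computation: if $g_n\xrightarrow{w} g$, then $\langle S(g_n),\lambda\rangle = \langle g_n, \Phi(\lambda)\rangle \to \langle g,\Phi(\lambda)\rangle = \langle S(g),\lambda\rangle$ for every $\lambda$.

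Conversely, given $S$, I would define $\Phi(\lambda)\in\mc{M}_{fr}(Y)$ by applying the Riesz--Markov theorem to the bounded linear functional $L_\lambda(g) := \langle S(g),\lambda\rangle$ restricted to $\mc{C}_0(Y)$, which is legitimate since $Y$ is locally compact Polish. To promote the representing equation from $\mc{C}_0(Y)$ to all of $\mc{C}_b(Y)$, pick a cutoff sequence $h_n\in\mc{C}_c(Y)$ with $0\le h_n\le 1$ and $h_n\uparrow 1$ pointwise (obtainable from the compact exhaustion in Lemma \ref{lemma_bubbles} together with Urysohn's lemma). For $g\in\mc{C}_b(Y)$, one has $gh_n\in\mc{C}_c(Y)\subseteq\mc{C}_0(Y)$ and $gh_n\xrightarrow{w}g$ by bounded pointwise convergence; weak continuity of $S$ gives $S(gh_n)\xrightarrow{w} S(g)$, so both sides of the adjoint equation for $gh_n$ pass to the limit. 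Linearity of $\Phi$ follows from linearity of $L_\lambda$ in $\lambda$ and uniqueness in Riesz; $\|\Phi\|\le\|S\|$ from $\|\Phi(\lambda)\| = \sup\{|\langle g,\Phi(\lambda)\rangle| : g\in\mc{C}_0(Y),\|g\|_\infty\le 1\} \le \|S\|\,\|\lambda\|$; and weak continuity of $\Phi$ is immediate from the adjoint equation. Testing against $\lambda=\delta_x$ then yields the reverse bound $\|S\|\le\|\Phi\|$, so the norms match.

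The main obstacle is the two extension arguments --- from point masses to arbitrary finite regular measures in the forward direction, and from $\mc{C}_0(Y)$ to $\mc{C}_b(Y)$ in the reverse direction. Each step requires commuting a weak-type limit with $\Phi$ or $S$, and it is precisely the weak continuity hypotheses that legitimize these interchanges; without them, the pointwise and Riesz constructions only produce operators on strictly smaller domains and cannot be promoted to a genuine Radon adjoint pair between $\mc{C}_b(Y)$ and $\mc{M}_{fr}(X)$.
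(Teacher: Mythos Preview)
Your proposal is correct and follows essentially the same route as the paper: define $S(g)(x)=\langle g,\Phi(\delta_x)\rangle$ and extend the adjoint identity from Dirac masses to all of $\mc{M}_{fr}(X)$ by weak density (Proposition~\ref{prop_density_measures}) plus weak continuity of $\Phi$; conversely, obtain $\Phi(\lambda)$ via Riesz--Markov on $\mc{C}_0(Y)$ and promote to $\mc{C}_b(Y)$ using bounded-pointwise density and weak continuity of $S$, then read off the norm equality from the two inequalities $\|S\|\le\|\Phi\|$ and $\|\Phi\|\le\|S\|$. The only cosmetic difference is that the paper invokes bounded-pointwise density of $\mc{C}_0(Y)$ in $\mc{C}_b(Y)$ abstractly together with the Dominated Convergence Theorem, whereas you build an explicit cutoff sequence from Lemma~\ref{lemma_bubbles}; these are the same idea.
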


\begin{proof}
For the first claim, define $S(g)(x) := \langle g, \Phi(\delta_x) \rangle$ for all $g \in \mc{F}_Y$, $x \in X$. It follows that $\langle S(g), \delta_x \rangle = \langle g, \Phi(\delta_x) \rangle$ for all $g,x$. Let $x_n \to x$ on $X$, so that $\delta_{x_n} \xrightarrow{w} \delta_x$, which means that $\Phi(\delta_{x_n}) \xrightarrow{w} \Phi(\delta_x)$. Also let $g_n \to g$ bounded-pointwise in $\mc{F}_Y$ (i.e. $g_n \xrightarrow{w} g$). Then the statements below ensure that $S(g) \in \mc{F}_Y$, that $S$ is bounded (hence uniform-continuous) and that $S$ is bounded-pointwise-continuous (via the Dominated Convergence Theorem):
$$S(g)(x_n) = \langle S(g), \delta_{x_n} \rangle = \langle g, \Phi(\delta_{x_n}) \rangle \to \langle g, \Phi(\delta_x) \rangle = \langle S(g), \delta_x \rangle = S(g)(x);$$
$$||S(g)|| = \sup_{x \in X} |S(g)(x)| = \sup_{x \in X} |\langle S(g), \delta_x \rangle| = \sup_{x \in X}|\langle g, \Phi(\delta_x)\rangle| \le ||g|| \cdot ||\Phi||;$$
$$S(g_n)(x) = \langle S(g_n),\delta_x\rangle = \langle g_n, \Phi \delta_x \rangle \to \langle g,\Phi \delta_x \rangle = \langle S(g), \delta_x \rangle = S(g)(x).$$

Since countable linear combinations of point-mass measures are weakly dense in $\mc{M}_X$, the linearity and weak-continuity of the second coordinate in the $\langle \cdot,\cdot \rangle$ structure and the weak-continuity of $\Phi$ yields that $\langle S(g), \lambda \rangle = \langle g, \Phi \lambda \rangle $ for all $g \in \mc{F}_Y$ and $\lambda \in \mc{M}_X$. Hence, $S$ is the Radon adjoint of $\Phi$ with the desired properties.

For the second claim, note that for every $\lambda \in \mc{M}_X$, the continuous functional $g \mapsto \langle S(g), \lambda \rangle$ defined on $\mc{C}_0(Y)$ has Riesz representation $\langle \cdot, \Phi(\lambda) \rangle$ for some unique signed measure $\Phi(\lambda) \in \mc{M}_Y$. Defining $\Phi$ in this manner for all $\lambda$, we obtain the equation $\langle S(g), \lambda \rangle = \langle g, \Phi \lambda \rangle$ for all $g \in \mc{C}_0(Y)$ and $\lambda \in \mc{M}_X$. $C_0(Y)$ is dense in $C_b(Y)$ with respect to bounded-pointwise convergence, so with $\mc{C}_0(Y) \ni g_n \xrightarrow{w} g \in \mc{C}_b(Y)$, it follows that $\langle g_n, \Phi \lambda \rangle \to \langle g, \Phi \lambda \rangle$ by the Dominated Convergence Theorem. Similarly, bounded-pointwise-continuity of $S$ ensures that $S(g_n) \xrightarrow{w} S(g)$, which means that $\langle S(g_n), \lambda \rangle \to \langle S(g), \lambda \rangle$ by the Dominated Convergence Theorem. Therefore, $\langle S(g), \lambda \rangle = \langle g, \Phi \lambda \rangle$ for all $g \in \mc{F}_Y$ and $\lambda \in \mc{M}_X$, implying that $\Phi$ is the Radon adjoint of $S$. An earlier remark shows that $\Phi$ is strongly $\sigma$-additive. To see that $\Phi$ is weakly-continuous, let $\lambda_n \xrightarrow{w} \lambda$. Then $\langle g, \Phi \lambda_n \rangle = \langle S(g), \lambda_n \rangle \to \langle S(g), \lambda \rangle = \langle g, \Phi\lambda \rangle$. Therefore, $\Phi \lambda_n \xrightarrow{w} \Phi \lambda$. Finally, $||\Phi|| \le ||S||$, hence $||\Phi|| = ||S||$, follows via
$$||\Phi \lambda|| = \sup_{||g||=1} |\langle g, \Phi \lambda \rangle| = \sup_{||g||=1} |\langle S(g), \lambda \rangle| \le ||S||\cdot ||\lambda||.$$
\end{proof}

\section{Results in the Measurable Setting: \\ $\mc{F}_X = \mc{L}^\infty (X,\mu) ~,~ \mc{M}_X = \mc{M}^\infty_\mu$}

In this setting, let $(X,\Sigma_X,\mu)$ be a finite measure space, let $\mc{F}_X := \mc{L}^\infty(X,\mu)$ and let $\mc{M}_X := \mc{M}^\infty_\mu$. Define $(Y,\Sigma_Y,\nu), \mc{F}_Y, \mc{M}_Y$ analogously. Then it is straightforward to verify that $\mc{F}_X$ and $\mc{M}_X$ separate each other.

An approximation of identity can be formed in this setting: for each natural $n$ and $1 \le i \le p(n)$, replace each point-mass measure $\delta_{x_i}$ used in Proposition \ref{prop_density_measures} with the measure $\rho_{n,i} := 1_{C_{n,i}} \mu$ and define $f_{n,i} := 1_{C_{n,i}}/\mu(C_{n,i})$ when $\mu(C_{n,i})>0$; otherwise, define $f_{n,i} = 0$.
That is, an approximation of identity in the measurable setting is given by the sequence $(I_n)$, where 
$$I_n: \lambda \mapsto \sum_{i=1}^{p(n)} \langle f_{n,i} , \lambda \rangle ~ \rho_{n,i} = \displaystyle \sum_{\substack{i=1 \\ \mu(C_{n,i}) > 0}}^{p(n)} \left\langle \dfrac{1_{C_{n,i}}}{\mu(C_{n,i})} , \lambda \right\rangle ~ 1_{C_{n,i}} \mu.$$ 
The following lemma will be used in the proof of the next theorem:
\begin{lemma}\label{lem_adj_Markov}
For every strongly $\sigma$-additive transfunction $\Phi: \mc{M}^\infty_\mu \to \mc{M}^\infty_\nu$, there is a unique strongly $\sigma$-additive transfunction $\Phi^\dag: \mc{M}^\infty_\nu \to \mc{M}^\infty_\mu$ such that $\Phi^\dag(1_B \nu) (A) = \Phi(1_A \mu) (B)$ for all $A \subseteq X, B \subseteq Y$, which implies that $\langle f, \Phi^\dag (g \nu) \rangle = \langle \Phi(f \mu), g \rangle$ for all $f \in \mc{L}^\infty (X,\mu)$, $g \in \mc{L}^\infty(Y,\nu)$. Also, $\Phi^{\dag \dag} = \Phi$.
\end{lemma}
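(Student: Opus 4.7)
The plan is to define $\Phi^\dag$ directly by the desired duality formula. For $g \in \mc{L}^\infty(Y,\nu)$, set
$$\Phi^\dag(g\nu)(A) := \int_Y g \, d\Phi(1_A\mu), \quad A \in \Sigma_X.$$
First I would verify well-definedness on equivalence classes: if $g_1 = g_2$ holds $\nu$-a.e., then since $\Phi(1_A\mu) \in \mc{M}^\infty_\nu \subseteq \mc{M}_\nu$ is absolutely continuous with respect to $\nu$, the two integrals coincide. Next, to see that $A \mapsto \Phi^\dag(g\nu)(A)$ is a signed measure on $(X,\Sigma_X)$: for pairwise disjoint $(A_n)$ with union $A$, the series $\sum_n 1_{A_n}\mu$ converges to $1_A\mu$ in total variation (since $\mu(X) < \infty$), so strong $\sigma$-additivity of $\Phi$ gives $\Phi(1_A\mu) = \sum_n \Phi(1_{A_n}\mu)$ in total variation, and integration of the bounded function $g$ preserves the sum. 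The estimate $|\Phi^\dag(g\nu)(A)| \le ||g||_\infty \cdot ||\Phi|| \cdot \mu(A)$ then forces $\Phi^\dag(g\nu) \ll \mu$ with an essentially bounded Radon--Nikodym derivative, so $\Phi^\dag(g\nu) \in \mc{M}^\infty_\mu$.

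With $\Phi^\dag$ defined on all of $\mc{M}^\infty_\nu$, I would promote the key identity from indicators to arbitrary bounded $f$: for $f = 1_A$ the relation $\int_X f \, d\Phi^\dag(g\nu) = \int_Y g \, d\Phi(f\mu)$ is the definition, and linearity followed by approximation of bounded $f$ by simple functions (pushing through $\Phi$ via TV-continuity on the right, applying dominated convergence on the left) yields
$$\int_X f \, d\Phi^\dag(g\nu) = \int_Y g \, d\Phi(f\mu) \qquad \text{for all } f \in \mc{L}^\infty(X,\mu),\ g \in \mc{L}^\infty(Y,\nu).$$

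The main obstacle is showing that $\Phi^\dag$ is itself strongly $\sigma$-additive, i.e., continuous in total variation. The naive estimate controls $||\Phi^\dag(g\nu)||$ by $||g||_\infty$ rather than by $||g\nu|| = \int |g| \, d\nu$, so a sharper bound is required. To obtain it I would invoke the closed graph theorem for the density operator $T : \mc{L}^\infty(X,\mu) \to \mc{L}^\infty(Y,\nu)$ determined by $\Phi(f\mu) = T(f)\,\nu$: if $f_n \to f$ and $T(f_n) \to h$ uniformly, finiteness of $\mu$ and $\nu$ upgrades these to $\mc{L}^1$-convergences, so $f_n\mu \to f\mu$ and $T(f_n)\nu \to h\nu$ in total variation, and TV-continuity of $\Phi$ forces $h = T(f)$ $\nu$-a.e. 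Thus $T$ has closed graph and is bounded by some constant $K$. For simple $f$ with $|f| \le 1$,
$$\left| \int_Y g \, d\Phi(f\mu) \right| = \left| \int_Y g \cdot T(f)\, d\nu \right| \le K \int_Y |g|\, d\nu = K \, ||g\nu||,$$
and taking the supremum over such $f$ gives $||\Phi^\dag(g\nu)|| \le K\, ||g\nu||$, which is the sought TV-continuity.

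Uniqueness of $\Phi^\dag$ then follows: any strongly $\sigma$-additive $\Psi$ satisfying the indicator identity agrees with $\Phi^\dag$ on simple density measures by linearity, and uniform approximation of $g \in \mc{L}^\infty(Y,\nu)$ by simple functions (which forces $g_n\nu \to g\nu$ in total variation) combined with TV-continuity of both $\Psi$ and $\Phi^\dag$ propagates agreement to all of $\mc{M}^\infty_\nu$. The involutive identity $\Phi^{\dag\dag} = \Phi$ is immediate from the duality: applying the whole construction to $\Phi^\dag$ gives
$$\int_Y g \, d\Phi^{\dag\dag}(f\mu) = \int_X f \, d\Phi^\dag(g\nu) = \int_Y g \, d\Phi(f\mu),$$
and the separating pair property of $\{\mc{L}^\infty(Y,\nu), \mc{M}^\infty_\nu\}$ yields $\Phi^{\dag\dag}(f\mu) = \Phi(f\mu)$ for all $f$.
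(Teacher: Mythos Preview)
Your construction is the paper's: both define $\Phi^\dag(g\nu)(A)=\int_Y g\,d\Phi(1_A\mu)$, verify that this is a measure using strong $\sigma$-additivity of $\Phi$, extend the pairing identity from indicators to all bounded $f$ and $g$, and read off $\Phi^{\dag\dag}=\Phi$ from the symmetry of that identity on indicators. The verifications you add (well-definedness on $\nu$-equivalence classes, the bound $|\Phi^\dag(g\nu)(A)|\le \|g\|_\infty\|\Phi\|\,\mu(A)$ giving $\Phi^\dag(g\nu)\in\mc{M}^\infty_\mu$) are routine details the paper omits.

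The one substantive difference is the treatment of strong $\sigma$-additivity of $\Phi^\dag$. The paper simply asserts ``The extended $\Psi$ is strongly $\sigma$-additive on $\mc{M}_\nu^\infty$'' without argument. You correctly observe that the immediate estimate controls $\|\Phi^\dag(g\nu)\|$ only by $\|g\|_\infty$, not by $\|g\nu\|$, and you close the gap by introducing the density operator $T=J_\nu^{-1}\Phi J_\mu:\mc{L}^\infty(X,\mu)\to\mc{L}^\infty(Y,\nu)$ and running the closed graph theorem to get $\|T(f)\|_\infty\le K\|f\|_\infty$; then $|\langle g,\Phi(f\mu)\rangle|=|\int_Y g\,T(f)\,d\nu|\le K\|g\nu\|$ for $\|f\|_\infty\le1$ yields the total-variation bound. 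This is a genuine strengthening of the paper's argument rather than a different route. One minor point: once you have established the duality identity for all $f\in\mc{L}^\infty(X,\mu)$, you can take the supremum over all such $f$ with $\|f\|_\infty\le1$ directly rather than restricting to simple $f$.
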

\begin{proof}
Let $\Phi$ be strongly $\sigma$-additive. For fixed $B \subseteq Y$, it follows by strong $\sigma$-additivity of $\Phi$ that the set function $ A \mapsto \Phi(1_A \mu)(B)$
is a measure. Define this measure to be $\Psi(1_B \nu)$. Then $\Psi$, defined on $\{1_B \nu ~|~ B \subseteq Y\}$ is a strongly $\sigma$-additive transfunction that behaves like $\Phi^\dag$ in the equality above. $\Psi$ can be linearly extended to $\mc{M}^\infty_\nu$ according to the following equalities for $A\subseteq X, g \cong \sum_j \beta_j 1_{B_j} $ with $\sum_j |\beta_j| < \infty$:

\begin{align*}
\Psi (g \nu) (A) = \sum_{j=1}^\infty \beta_j \Psi(1_{B_j} \nu) (A) = \sum_{j=1}^\infty \beta_j \Phi (1_A \mu) (B_j) = \int_Y g ~d\Phi (1_A \mu).
\end{align*}
The extended $\Psi$ is strongly $\sigma$-additive on $\mc{M}_\nu^\infty$. A similar calculation shows that 
$$\int_X f ~d\Psi(g\nu) = \int_Y g ~d\Phi(f\mu)$$ 
for all $f \in \mc{L}^\infty(X,\mu)$ and $g \in \mc{L}^\infty(Y,\nu)$. Therefore $\Phi^\dag$ is uniquely determined to be $\Psi$. Finally, $\Phi^{\dag\dag}(1_A \mu)(B) = \Phi^\dag(1_B \nu)(A) = \Phi(1_A \mu)(B)$ for all $A\subseteq X$ and $B\subseteq Y$, so $\Phi^{\dag\dag} = \Phi$.
\end{proof} 

If $\Phi$ is Markov, then $\Phi^\dag$ is also Markov. Furthermore, the plans $\kappa, \kappa^\dag$ corresponding to $\Phi,\Phi^\dag$ respectively are dual to each other: that is, $\kappa(A \times B) = \kappa^\dag(B \times A)$ for all measurable sets $A \subseteq X, B \subseteq Y$. However, $\Phi^\dag$ is sensitive to the choice of measures $\mu$ and $\nu$, which is not ideal when working with non-injective extensions of Markov transfunction $\Phi$.

\begin{theorem}\label{thm_Radon_2}
Every strongly $\sigma$-additive $\Phi: \mc{M}^\infty_\mu \to \mc{M}^\infty_\nu$ has a linear and bounded Radon adjoint $S: \mc{L}^\infty(Y,\nu) \to \mc{L}^\infty(X,\mu)$. Conversely, every linear and bounded operator $S: \mc{L}^\infty(Y,\nu) \to \mc{L}^\infty(X,\mu)$ has a strongly $\sigma$-additive adjoint $\Phi: \mc{M}^\infty_\mu \to \mc{M}^\infty_\nu$.
\end{theorem}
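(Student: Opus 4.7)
My plan is to realize both directions of the theorem as a Radon--Nikodym rewrapping of Lemma \ref{lem_adj_Markov}. Because $\mc{M}^\infty_\mu$ and $\mc{L}^\infty(X,\mu)$ are in one-to-one correspondence through $h\mu \leftrightarrow h$ (and likewise for $\nu$), the Radon adjoint relation $\int_Y g \, d\Phi(f\mu) = \int_X S(g) f \, d\mu$ is essentially the second identity of that lemma once one sets $\Phi^\dag(g\nu) = S(g)\mu$; the whole proof is the systematic unpacking of this observation.

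For the forward direction, I apply Lemma \ref{lem_adj_Markov} to produce $\Phi^\dag: \mc{M}^\infty_\nu \to \mc{M}^\infty_\mu$. Since $\Phi^\dag(g\nu) \in \mc{M}^\infty_\mu$, Radon--Nikodym yields a unique density $S(g) \in \mc{L}^\infty(X,\mu)$ with $\Phi^\dag(g\nu) = S(g)\mu$; this defines $S$. Linearity is inherited, and the Radon adjoint identity on $\lambda = f\mu$ is immediate from the second identity of the lemma: $\int_X S(g) f \, d\mu = \int_X f \, d\Phi^\dag(g\nu) = \int_Y g \, d\Phi(f\mu)$. Boundedness follows by testing against indicators: $\left| \int_A S(g) \, d\mu \right| = \left| \int_Y g \, d\Phi(1_A \mu) \right| \le \|g\|_\infty \cdot \|\Phi\| \cdot \mu(A)$, and letting $A$ isolate the set where $|S(g)|$ exceeds any threshold forces $|S(g)| \le \|\Phi\| \cdot \|g\|_\infty$ $\mu$-a.e., giving $\|S\| \le \|\Phi\|$.

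For the converse, given bounded linear $S$, define $\Phi(\lambda)(B) := \int_X S(1_B) \, d\lambda$ for each $\lambda \in \mc{M}^\infty_\mu$ and $B \in \Sigma_Y$. Finite additivity is automatic from the linearity of $S$, and absolute continuity $\Phi(\lambda) \ll \nu$ is automatic from $\nu(B) = 0 \Rightarrow 1_B = 0$ in $\mc{L}^\infty(Y,\nu)$. Once $\sigma$-additivity is established, the uniform bound $|\Phi(\lambda)(B)| \le \|S\| \cdot \|\lambda\|_{TV}$ yields both essential boundedness of $d\Phi(\lambda)/d\nu$ (hence $\Phi(\lambda) \in \mc{M}^\infty_\nu$) and TV-boundedness of $\Phi$. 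Linearity of $\Phi$ is immediate, and the adjoint identity extends from indicators to arbitrary $g \in \mc{L}^\infty(Y,\nu)$ by the uniform density of simple functions together with norm-continuity of $S$ and of integration against $\lambda$.

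The hard step is precisely the $\sigma$-additivity in the converse direction: for disjoint $B_n$ with union $B$, the partial sums $\sum_{n=1}^N 1_{B_n}$ converge to $1_B$ only pointwise-boundedly and not in $\mc{L}^\infty$ norm, so operator-norm continuity of $S$ alone does not obviously yield $\sum_n \int_X S(1_{B_n}) \, d\lambda = \int_X S(1_B) \, d\lambda$. My intended approach is to view $g \mapsto \langle S(g), \lambda \rangle$ as a bounded linear functional on $\mc{L}^\infty(Y,\nu)$, whose representing element in the dual is a priori only finitely additive, and then use a Vitali--Hahn--Saks type argument, exploiting the finiteness of $\lambda$ and the structure of $\mc{M}^\infty_\mu$, to upgrade this representing set function to a countably additive one; if a clean route fails, the theorem would need to be closed under a supplementary continuity hypothesis on $S$ (for instance, continuity under bounded-pointwise $\nu$-a.e. convergence) which here is tacitly carried by the Radon adjoint framework.
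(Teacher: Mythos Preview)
Your forward direction is exactly the paper's: the paper writes $S := J_\mu^{-1}\,\Phi^\dag\,J_\nu$, which is precisely ``take the Radon--Nikodym density of $\Phi^\dag(g\nu)$ with respect to $\mu$.'' For the converse the paper takes a formally slicker route---set $\Psi := J_\mu\,S\,J_\nu^{-1}$ (so $\Psi(g\nu)=S(g)\mu$) and then define $\Phi:=\Psi^\dag$ via Lemma~\ref{lem_adj_Markov}---but this produces exactly your $\Phi(\lambda)(B)=\int_X S(1_B)\,d\lambda$, so the two constructions coincide; you have simply unpacked the $\dag$ machinery by hand.

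The gap you flag is real, and the paper's argument does not escape it. Applying Lemma~\ref{lem_adj_Markov} to $\Psi$ requires $\Psi$ to be strongly $\sigma$-additive, i.e.\ TV-continuous; concretely, the lemma's proof needs $B\mapsto \Psi(1_B\nu)(A)=\int_A S(1_B)\,d\mu$ to be countably additive, which is precisely your $\sigma$-additivity question. But $\|g\nu\|_{TV}=\|g\|_{L^1(\nu)}$ and $\|\Psi(g\nu)\|_{TV}=\|S(g)\|_{L^1(\mu)}$, so TV-continuity of $\Psi$ amounts to $L^1$--$L^1$ boundedness of $S$, which does \emph{not} follow from $L^\infty$--$L^\infty$ boundedness alone (e.g.\ take $\mu$ a point mass so that $\mc{L}^\infty(X,\mu)\cong\mb{R}$, and let $S$ be a purely finitely additive functional on $\mc{L}^\infty(Y,\nu)$; then $B\mapsto S(1_B)$ is not $\sigma$-additive and no $\Phi$ with values in $\mc{M}^\infty_\nu$ can satisfy the adjoint identity). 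Your instinct is therefore correct: without an additional hypothesis on $S$---bounded-pointwise (equivalently weak-$*$) continuity is the natural one, and it is exactly what the forward direction produces---the converse as stated cannot be closed, in your argument or the paper's.
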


\begin{proof}

Assume that $\Phi: \mc{M}^\infty_\mu \to \mc{M}^\infty_\nu$ is strongly $\sigma$-additive and define $S := J_\mu^{-1} \Phi^\dag  J_\nu$ with domain $\mc{L}^\infty (Y,\nu)$. Then $S$ is linear and bounded. $S = \Phi^\ast$ follows because for any $f \in \mc{L}^\infty (X,\mu)$ and $g \in \mc{L}^\infty(Y,\nu)$,
$$\langle g, \Phi (f \mu)\rangle = \langle \Phi^\dag (g \nu), f \rangle = \langle (J_\mu  S) g , f \rangle = \langle S(g), f \mu \rangle .$$ 

On the other hand, assume that $S : \mc{L}^\infty(Y,\nu) \to \mc{L}^\infty(X,\mu)$ is linear and bounded. Then define $\Psi := J_\mu  S  J_\nu^{-1}$ and $\Phi:= \Psi^\dag$. Then $\Phi$ is strongly $\sigma$-additive. $\Phi = S^\ast$ follows because for any $f \in \mc{L}^\infty (X,\mu)$ and $g \in \mc{L}^\infty(Y,\nu)$,
$$\langle S(g), f \mu \rangle = \langle (J_\mu^{-1}  \Psi) (g\nu) , f\mu \rangle = \langle \Phi^\dag (g \nu), f \rangle = \langle g, \Phi (f \mu)\rangle.$$ 
\end{proof}

\section{Simple Transfunctions}

Let $\mc{F}_X$, $\mc{F}_Y$, $\mc{M}_X$, and $\mc{M}_Y$ be defined in either the continuous setting or the measurable setting.

\begin{definition}
A transfunction $\Phi: \mc{M}_X \to \mc{M}_Y$ is \emph{simple} if there exist functions $(f_i)_{i=1}^m$ from $\mc{F}_X$ and there exist measures $(\rho_i)_{i=1}^m$ from $\mc{M}_Y$ such that

$$\forall \lambda \in \mc{M}_X ,~~~ \Phi \lambda = \sum_{i=1}^m \langle f_i, \lambda \rangle \rho_i.$$

\end{definition}

It is straightforward to verify that simple transfunctions are strongly $\sigma$-additive. In the continuous setting, simple transfunctions are also weakly-continuous. Therefore by Theorem \ref{thm_Radon_adjoint_char} in the continuous setting or Theorem \ref{thm_Radon_2} in the measurable setting, the Radon adjoint $\Phi^\ast$ exists and satisfies
$$\forall g\in \mc{F}_Y,~~~ \Phi^\ast g=\sum_{i=1}^m \langle g, \rho_i \rangle f_i.$$

Note that the approximations of identity covered in the continuous setting in (Subsection 3.2) and in the measurable setting in (Subsection 3.3) involve sequences of simple transfunctions.

\begin{theorem}
In either continuous or measurable settings, linear weakly-continuous transfunctions can be approximated by simple transfunctions with respect to weak convergence; that is, simple transfunctions form a dense subset of linear weakly-continuous transfunctions with respect to weak convergence.
\end{theorem}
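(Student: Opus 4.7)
The plan is to obtain the approximating simple transfunctions by composing $\Phi$ with the approximation of identity $(I_n)$ constructed just before the theorem in each setting. Because $I_n$ already has the simple form
\[
I_n \lambda = \sum_{i=1}^{p(n)} \langle f_{n,i}, \lambda \rangle\, \rho_{n,i}
\]
with $f_{n,i} \in \mc{F}_X$ and $\rho_{n,i} \in \mc{M}_X$, linearity of $\Phi$ will transport this representation to a simple transfunction from $\mc{M}_X$ into $\mc{M}_Y$, and weak continuity of $\Phi$ will transport the convergence $I_n \lambda \xrightarrow{w} \lambda$ to $\Phi(I_n \lambda) \xrightarrow{w} \Phi \lambda$.

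First, I will fix one of the two settings and take $(I_n)$ to be the approximation of identity described in Section 5 (continuous setting) or Section 6 (measurable setting); by the construction there, $I_n \lambda \xrightarrow{w} \lambda$ for every $\lambda \in \mc{M}_X$, where weak convergence is understood with respect to the separating pair of the current setting. Next, given an arbitrary linear weakly-continuous transfunction $\Phi : \mc{M}_X \to \mc{M}_Y$, I will define $\Phi_n := \Phi \circ I_n$ and use finite linearity of $\Phi$ to compute
\[
\Phi_n \lambda \;=\; \Phi\!\left(\sum_{i=1}^{p(n)} \langle f_{n,i}, \lambda \rangle\, \rho_{n,i}\right) \;=\; \sum_{i=1}^{p(n)} \langle f_{n,i}, \lambda \rangle\, \Phi(\rho_{n,i}).
\]
Since $f_{n,i} \in \mc{F}_X$ and $\Phi(\rho_{n,i}) \in \mc{M}_Y$ by construction of $\Phi$, this exhibits each $\Phi_n$ as a simple transfunction in the sense of the definition at the start of Section 7.

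To conclude, I will fix an arbitrary $\lambda \in \mc{M}_X$. The approximation of identity yields $I_n \lambda \xrightarrow{w} \lambda$, and weak continuity of $\Phi$ then yields $\Phi(I_n \lambda) \xrightarrow{w} \Phi(\lambda)$, i.e., $\Phi_n \lambda \xrightarrow{w} \Phi \lambda$. Since $\lambda$ was arbitrary, $(\Phi_n)$ converges to $\Phi$ pointwise in the weak topology, which is the natural notion of weak convergence of transfunctions used throughout the paper; this gives the required density.

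The main technical obstacle is already absorbed into the construction of the approximations of identity in Sections 5 and 6 — verifying $I_n \lambda \xrightarrow{w} \lambda$ against the relevant test functions (bounded continuous functions in the continuous setting, and $\mc{L}^\infty$ functions in the measurable setting) is where the real analytic work lives, following the telescoping estimate of Proposition \ref{prop_density_measures}. Given those facts as inputs, the argument for the present theorem is essentially algebraic: it uses only that $\Phi$ commutes with the finite sum defining $I_n \lambda$ and that weak limits pass forward through $\Phi$. A minor subtlety worth checking is that composition by $\Phi$ preserves the form of a simple transfunction without requiring any additional regularity on $\Phi(\rho_{n,i})$, which is immediate because $\mc{M}_Y$ is the codomain of $\Phi$ in each setting.
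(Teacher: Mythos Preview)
Your proposal is correct and follows essentially the same route as the paper: define $\Phi_n := \Phi \circ I_n$, use linearity of $\Phi$ to rewrite $\Phi_n\lambda$ in simple form $\sum_{i=1}^{p(n)} \langle f_{n,i},\lambda\rangle\,\Phi(\rho_{n,i})$, and then invoke $I_n\lambda \xrightarrow{w} \lambda$ together with weak continuity of $\Phi$ to obtain $\Phi_n\lambda \xrightarrow{w} \Phi\lambda$. One small correction to your commentary: in both settings the paper's notion of weak convergence of measures (Definition~\ref{def_weak_convergence}(ii)) is tested against bounded continuous functions, not against $\mc{L}^\infty$ functions; this does not affect your argument since you rightly treat $I_n\lambda \xrightarrow{w} \lambda$ as an input from the earlier sections.
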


\begin{proof}
Let $\Phi: \mc{M}_X \to \mc{M}_Y$ be weakly-continuous transfunction and fix ${\lambda \in \mc{M}_X}$. Define $\Phi_n := \Phi ~ I_n$, where $I_n: \lambda \mapsto \sum_{i=1}^{p(n)} \langle f_{n,i} , \lambda \rangle \rho_{n,i} $ forms the approximation of identity as defined in either Subsections 3.2 (continuous setting) or 3.3 (measurable setting). Then $\Phi_n \lambda = \sum_{i=1}^{p(n)} \langle f_{n,i} , \lambda \rangle \Phi \rho_{n,i}$, implying that $\Phi_n$ is a simple transfunction. It follows by $I_n \lambda \xrightarrow{w} \lambda$ and by weak-continuity of $\Phi$ that $\Phi_n \lambda = \Phi (I_n \lambda) \xrightarrow{w} \Phi \lambda$.
\end{proof}

\section{Applications: Optimal Transport}

Markov transfunctions provide a new perspective to optimal transport theory.

\begin{definition}\label{def_c_optimal}
Let $(X,\Sigma_X, \mu)$ and $(Y, \Sigma_Y, \nu)$ be Polish measure spaces with finite positive measures $\mu$ and $\nu$,  respectively, with $||\mu||=||\nu||$.
A \emph{cost function} is any continuous function $c: X \times Y \to [0,\infty)$. A plan $\kappa \in \Pi(\mu,\nu)$ is \emph{$c$-optimal} if $\int_{X \times Y} c ~d\kappa \leq \int_{X\times Y} c ~d\pi$ for all $\pi \in \Pi(\mu,\nu)$.
A Markov transfunction $\Phi: \mc{M}_X \to \mc{M}_Y$ is \emph{$c$-optimal on $\mu$} if the corresponding plan $\kappa$ with marginals $\mu$ and $\Phi \mu$ is $c$-optimal, and $\Phi$ is simply \emph{$c$-optimal} if it is $c$-optimal on $\mc{M}_X$. 
\end{definition}

The next proposition implies that optimal inputs for $\Phi$ form a large class of measures.

\begin{proposition}
Let $(X,\Sigma_X), (Y, \Sigma_Y)$ be Polish spaces, let $c$ be a cost function, and let $\Phi: \mc{M}_X \to \mc{M}_Y$ be a $c$-optimal Markov transfunction. If $\Phi$ is $c$-optimal on $\mu \in \mc{M}_X$, then $\Phi$ is $c$-optimal on $\mc{M}^\infty_\mu$.
\end{proposition}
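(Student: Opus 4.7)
The goal is to upgrade $c$-optimality at a single measure $\mu$ to $c$-optimality at every member of $\mc{M}^\infty_\mu$. Since plans and thus $c$-optimality are defined for positive marginals, I would restrict attention to positive $\lambda \in \mc{M}^{\infty,+}_\mu$; the general case reduces to this via Jordan decomposition componentwise. Write $\lambda = h\mu$ with $0 \le h \in \mc{L}^\infty(X,\mu)$ and set $M := \|h\|_\infty < \infty$, so that $\lambda \le M\mu$ and $(M-h)\mu$ is also a positive element of $\mc{M}^{\infty,+}_\mu$.

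Next I would exploit disintegration. Because $X$ is Polish and $\mu$ is finite, the hypothesized $c$-optimal plan $\kappa \in \Pi(\mu, \Phi\mu)$ disintegrates as $d\kappa(x,y) = d\mu(x)\, d\kappa_x(y)$ for a probability kernel $(\kappa_x)_{x \in X}$. By the final remark of Section 2, the plan associated to $\Phi$ restricted to $\mc{M}_\lambda$ is $\kappa_\lambda := (h \otimes 1_Y)\kappa$, which factors as $d\kappa_\lambda(x,y) = d\lambda(x)\, d\kappa_x(y)$; identically, $(M-h)\mu \otimes \kappa_\bullet$ is the plan for the pair $((M-h)\mu, \Phi((M-h)\mu))$, and $M\kappa \in \Pi(M\mu, M\Phi\mu)$ is $c$-optimal by scale invariance of the transport cost. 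Note the decomposition $M\kappa = \kappa_\lambda + (M-h)\mu \otimes \kappa_\bullet$, and that the strong $\sigma$-additivity of $\Phi$ forces the second marginal of $(M-h)\mu \otimes \kappa_\bullet$ to be $M\Phi\mu - \Phi\lambda = \Phi((M-h)\mu)$.

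The core is a cut-and-paste contradiction. Suppose, to the contrary, that some $\tilde\kappa \in \Pi(\lambda, \Phi\lambda)$ satisfies $\int c\, d\tilde\kappa < \int c\, d\kappa_\lambda$. Set $\hat\kappa := \tilde\kappa + (M-h)\mu \otimes \kappa_\bullet$. Its first marginal is $\lambda + (M-h)\mu = M\mu$, and its second marginal is $\Phi\lambda + \Phi((M-h)\mu) = \Phi(M\mu) = M\Phi\mu$, so $\hat\kappa \in \Pi(M\mu, M\Phi\mu)$. Adding the common quantity $\int c\, d((M-h)\mu \otimes \kappa_\bullet)$ to both sides of the strict inequality gives $\int c\, d\hat\kappa < \int c\, d(M\kappa)$, contradicting the $c$-optimality of $M\kappa$.

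The main obstacle is measure-theoretic bookkeeping rather than any deep idea: one must cite/justify the disintegration of $\kappa$ (Polish space, finite $\mu$), verify that the remark identifying the restricted plan as $(h \otimes 1_Y)\kappa$ applies even when $h$ vanishes on a set of positive $\mu$-measure, and ensure that the second marginal of $(M-h)\mu \otimes \kappa_\bullet$ really is $\Phi((M-h)\mu)$ — a step that hinges on $\Phi$ being strongly $\sigma$-additive and measure-preserving, i.e., Markov. Once these routine verifications are in place, the argument is purely a convex-combination/splicing computation.
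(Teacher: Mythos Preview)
Your argument is correct and is, at bottom, the same idea the paper uses: the paper simply invokes Theorem~4.6 of Villani (inheritance of optimality under restriction), and your cut-and-paste contradiction is precisely the proof of that theorem specialized to the present situation. So the approaches coincide; yours is just self-contained rather than a citation.

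One simplification worth making: the disintegration $d\kappa(x,y)=d\mu(x)\,d\kappa_x(y)$ is unnecessary overhead. Since $0\le h\le M$ $\mu$-a.e., the inequality $(h\otimes 1_Y)\kappa \le M\kappa$ holds directly as measures, so $M\kappa-\kappa_\lambda=((M-h)\otimes 1_Y)\kappa$ is automatically a nonnegative measure; its marginals are $(M-h)\mu$ and (by the remark at the end of Section~2 together with linearity of $\Phi$) $\Phi((M-h)\mu)=M\Phi\mu-\Phi\lambda$. The splicing then goes through without ever mentioning a kernel. A minor caveat, shared by the paper's citation of Villani, is that the contradiction step tacitly assumes $\int c\,d\kappa<\infty$ so that adding the common term preserves the strict inequality; this is harmless in the intended applications but should be noted if you want full generality.
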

\begin{proof}
The proof follows easily from Theorem 4.6 in \cite{Villani} on the inheritance of optimality of plans by restriction.
\end{proof}

In the next theorem, we provide a ``warehouse strategy" which approximates the optimal cost between fixed marginals with respect to some cost function. In summary, the input marginal is first subdivided by local regions, and the subdivided measures are sent to point mass measures -- warehouses -- within their respective regions. Second, mass is transferred between warehouses via the discrete transport problem. Finally, the warehouses locally redistribute to form the output marginal. The overall cost of transport via the warehouse strategy approaches the optimal cost as the size of the regions decreases.

\begin{theorem}\label{thm_approx_opt_cost}
Let $(X, \Sigma_X)$ be a locally compact Polish measurable space with complete metric $d$, let $\lambda$ and $\rho$ be finite positive compactly-supported measures with $||\lambda|| = ||\rho||$, and let $c: X \times X \to [0,\infty)$ be a cost function with $c(x,y) \le \alpha d(x,y)^p$ for some constants $\alpha,p > 0$.
The optimal cost between marginals $\lambda, \rho$ with respect to $c$ can be sufficiently approximated by the costs of simple Markov transfunctions.
\end{theorem}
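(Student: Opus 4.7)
The approach is to construct, for each $\epsilon > 0$, an explicit simple Markov transfunction $\Phi_n$ (in the measurable setting with reference measures $\lambda$ and $\rho$) whose associated plan on $\lambda$ has cost within $\epsilon$ of the optimal value. First invoke standard Monge--Kantorovich theory (see \cite{Villani}) to obtain an optimal plan $\kappa^{\ast} \in \Pi(\lambda,\rho)$ with finite cost $C^{\ast} := \int c \, d\kappa^{\ast}$; note that $\supp \kappa^{\ast} \subseteq \supp \lambda \times \supp \rho$ is compact, and $c$, being continuous, is uniformly continuous on any compact neighbourhood of this product. Apply Lemma~\ref{lemma_bubbles} and choose $n$ large enough that $\supp \lambda \cup \supp \rho \subseteq K_n^{\circ}$, every cell $C_{n,i}$ has diameter at most $2/n$, and the oscillation of $c$ on every product cell $C_{n,i} \times C_{n,j} \subseteq K_n \times K_n$ is at most $\epsilon/(1+\|\lambda\|)$; the hypothesis $c(x,y) \le \alpha d(x,y)^p$ also furnishes an explicit modulus for this step.

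Next, encode the optimal plan as discrete cell weights $w_{ij} := \kappa^{\ast}(C_{n,i} \times C_{n,j})$; by Fubini, $\sum_j w_{ij} = \lambda(C_{n,i})$ and $\sum_i w_{ij} = \rho(C_{n,j})$. Define
\[
\Phi_n(\mu) := \sum_{\substack{i,j \\ \lambda(C_{n,i}),\, \rho(C_{n,j}) > 0}} \left\langle \frac{1_{C_{n,i}}}{\lambda(C_{n,i})},\, \mu \right\rangle w_{ij} \cdot \frac{1_{C_{n,j}} \rho}{\rho(C_{n,j})} \;+\; \langle 1_{X \setminus K_n},\, \mu \rangle \, \frac{\rho}{\|\rho\|},
\]
a finite linear combination of the prescribed form, hence simple. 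A routine check confirms positivity, strong $\sigma$-additivity and measure-preservation on all of $\mc{M}^\infty_\lambda$ (the complement term absorbs any input mass outside $K_n$), so $\Phi_n$ is a simple Markov transfunction. Since $\supp \lambda \subseteq K_n$, the complement term vanishes on $\lambda$, and a direct computation gives $\Phi_n(\lambda) = \sum_j \rho(C_{n,j}) \cdot \bigl(1_{C_{n,j}} \rho / \rho(C_{n,j})\bigr) = \rho$, so the associated plan $\kappa_n \in \Pi(\lambda,\rho)$ equals $\sum_{i,j} w_{ij} \bigl(\lambda|_{C_{n,i}}/\lambda(C_{n,i})\bigr) \otimes \bigl(\rho|_{C_{n,j}}/\rho(C_{n,j})\bigr)$.

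The decisive estimate is then cell-by-cell: since $\kappa_n$ and $\kappa^{\ast}$ assign the same mass $w_{ij}$ to each product cell $C_{n,i} \times C_{n,j}$,
\[
\left| \int_{C_{n,i} \times C_{n,j}} c \, d\kappa_n - \int_{C_{n,i} \times C_{n,j}} c \, d\kappa^{\ast} \right| \le w_{ij} \cdot \operatorname{osc}\!\bigl(c,\, C_{n,i} \times C_{n,j}\bigr),
\]
and summing over $i,j$ yields $\bigl|\int c \, d\kappa_n - C^{\ast}\bigr| \le \bigl(\epsilon/(1+\|\lambda\|)\bigr)\|\lambda\| < \epsilon$, delivering the desired approximation. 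The main technical hurdle I anticipate is making $\Phi_n$ globally Markov rather than merely Markov on $\lambda$: degenerate cells (those with $\lambda(C_{n,i}) = 0$ or $\rho(C_{n,j}) = 0$) must be excluded without disturbing the marginals, and input mass on $X \setminus K_n$ must be routed somewhere harmless. The term $\langle 1_{X \setminus K_n},\mu\rangle \, \rho / \|\rho\|$ is the least intrusive fix and preserves the cost on $\lambda$ precisely because $\lambda(X \setminus K_n) = 0$ for $n$ large; verifying that every cell carrying positive $w_{ij}$ sits inside the compact set on which uniform continuity was invoked is the other point requiring care.
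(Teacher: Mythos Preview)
Your argument is correct, but it follows a genuinely different route from the paper. The paper does not begin with an optimal plan; instead it composes three simple Markov transfunctions in the \emph{continuous} setting: $\lambda \mapsto I_n\lambda$ (send mass to point-mass ``warehouses'' $\delta_{x_i}$), then an optimal discrete transport $I_n\lambda \mapsto I_n\rho$ between the two simple measures, then $I_n\rho \mapsto \rho$. The bound $c(x,y)\le \alpha d(x,y)^p$ is used directly to control the first and third steps by $\alpha n^{-p}\|\lambda\|$, and the middle step is identified with a finite Assignment Problem solvable by the Hungarian method. By contrast, you invoke the existence of an optimal $\kappa^\ast$ up front and then discretise it cell-by-cell in the \emph{measurable} setting---which is essentially the construction the paper reserves for Theorem~\ref{thm_approx_opt_transfunction}, applied to the optimal transfunction rather than a generic one.

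What each buys: the paper's warehouse strategy is algorithmic and does not presuppose access to $\kappa^\ast$; it yields a computable approximation via a discrete optimisation. Your approach is shorter and gives a cleaner oscillation estimate, but it is non-constructive (you must already know $\kappa^\ast$), and the hypothesis $c\le \alpha d^p$ is largely idle in your argument since uniform continuity on the compact support suffices. Two minor remarks: your complement term $\langle 1_{X\setminus K_n},\mu\rangle\,\rho/\|\rho\|$ is harmless but vacuous, since every $\mu\in\mc{M}^\infty_\lambda$ already vanishes off $\supp\lambda\subseteq K_n$; and the exclusion of degenerate cells causes no trouble because $\lambda(C_{n,i})=0$ forces $w_{ij}=0$ for all $j$ (and symmetrically for $\rho$), so no mass is lost.
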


\begin{proof}
Assume the continuous setting and consider the approximation of identity $(I_n)$ from Subsection 3.2. For large $n$, we create a composition of three simple Markov transfunctions: $\lambda$ first maps to $I_n \lambda = \sum_{i=1}^{p(n)} \langle f_{n,i}, \lambda \rangle \delta_{x_i}$, which maps to $I_n \rho = \sum_{i=1}^{p(n)} \langle f_{n,i}, \rho \rangle \delta_{x_i}$, which finally maps to $\rho$. These steps are measure-preserving because $K_n$ (from Lemma \ref{lemma_bubbles}) contains the supports of $\lambda$ and $\rho$ for large $n$. The most crucial goal is to determine the optimal simple Markov transfunction for the middle step.

The first and last steps cost no more than $\alpha n^{-p} ||\lambda||$ each, which reduces to 0 as $n \to \infty$. This means that the optimal cost between marginals $\lambda_n$ and $\rho_n$ approaches the optimal cost between marginals $\lambda$ and $\rho$ as $n \to \infty$. Solving the former optimal cost is the well-known discrete version of the Monge-Kantorovich transport problem. 

By approximating each of the values $\langle f_{n,i}, \lambda \rangle \approx a_{n,i}/z$ and $\langle f_{n,i}, \rho \rangle \approx b_{n,i}/z$ for natural numbers $a_{n,i}, b_{n,i}, z$ with $1 \le i \le p(n)$, the middle step can approximately be interpreted as the Assignment Problem on a weighted bipartite graph between vertex sets $P$ and $Q$, where $P$ denotes a set created by forming $a_{n,i}$ copies of a vertex corresponding to each $\delta_{x_i}$ in $\lambda_n$, $Q$ denotes the set created by forming $b_{n,j}$ copies of a vertex corresponding to each $\delta_{x_j}$ in $\rho_n$, and drawing edges between these vertices with weight $c(x_i, x_j)$. This problem has been studied, and can be solved in polynomial time of $|P| = \sum_{i=1}^{p(n)} a_{n,i} \approx ||\lambda|| z$; the Hungarian method is one well-known algorithm \cite{Kuhn}.
\end{proof}

Although Theorem \ref{thm_approx_opt_cost} provides a sequence of simple transfunctions that approximate the optimal cost between fixed marginals, the sequence is not expected to converge weakly to an optimal Markov transfunction, as the solutions to the middle step could vary greatly as $n$ increases. Consequently, we can find a Markov transfunction whose cost between marginals is sufficiently close to the optimal cost, but Theorem \ref{thm_approx_opt_cost} does not provide an optimal Markov transfunction.

However, for any Markov transfunction between fixed marginals, the next theorem yields an approximation by simple Markov transfunctions with respect to weak convergence. Consequently, the cost between the marginals of the constructed sequence of simple Markov transfunctions approaches the cost for the original transfunction.

\begin{theorem}\label{thm_approx_opt_transfunction}
Let $(X, \Sigma_X, \mu)$ and $(Y, \Sigma_Y, \nu)$ be locally compact Polish measure spaces with finite compactly-supported positive measures $\mu$ and $\nu$ such that $||\mu|| = ||\nu||$. Any Markov transfunction $\Phi: \mc{M}_\mu \to \mc{M}_\nu$ can be approximated by simple Markov transfunctions with respect to weak convergence.
\end{theorem}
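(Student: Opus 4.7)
The plan is to build, for each large $n$, an explicit simple Markov transfunction $\Phi_n$ from a continuous partition of unity around $\supp\mu$, and then to establish weak convergence $\Phi_n\lambda \xrightarrow{w} \Phi\lambda$ by transporting the error to the Markov operator associated with $\Phi$.

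First, I would construct the partition. Using Lemma \ref{lemma_bubbles}, choose compact sets $K_n\nearrow X$; for $n$ large enough, $\supp\mu\subseteq K_n^\circ$. Pick non-negative continuous compactly-supported functions $\phi_{n,1},\dots,\phi_{n,p(n)}$ forming a partition of unity on $K_n$, each with $\operatorname{diam}(\supp\phi_{n,i})\le 2/n$. Set $\alpha_{n,i}:=\langle\phi_{n,i},\mu\rangle$ and discard indices with $\alpha_{n,i}=0$. Then define
\[
\Phi_n\lambda := \sum_i \frac{\langle\phi_{n,i},\lambda\rangle}{\alpha_{n,i}}\,\Phi(\phi_{n,i}\mu).
\]
By construction $\Phi_n$ is a simple transfunction. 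I would then check that it is Markov: positivity is immediate since $\phi_{n,i}\mu\ge 0$ and $\Phi$ is positive; strong $\sigma$-additivity holds because $\Phi_n$ is a finite linear combination of the bounded linear functionals $\lambda\mapsto\langle\phi_{n,i},\lambda\rangle$; and measure-preservation follows from
\[
(\Phi_n\lambda)(Y)=\sum_i \frac{\langle\phi_{n,i},\lambda\rangle}{\alpha_{n,i}}\cdot\alpha_{n,i}=\int \sum_i \phi_{n,i}\,d\lambda=\lambda(X),
\]
since $\sum_i\phi_{n,i}=1$ on $\supp\mu$ and $\lambda\ll\mu$.

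Second, I would set up the convergence. Observe that $\Phi_n=\Phi\circ E_n$, where $E_n\lambda:= \sum_i \alpha_{n,i}^{-1}\langle\phi_{n,i},\lambda\rangle\,\phi_{n,i}\mu$. Writing $\lambda=h\mu$ gives $E_n\lambda=(E_nh)\mu$ with $E_nh := \sum_i \bar h_{n,i}\phi_{n,i}$ and $\bar h_{n,i}:=\alpha_{n,i}^{-1}\int h\phi_{n,i}\,d\mu$. Let $T=T_{\mu,\bar\nu}$ be the Markov operator associated with $\Phi$ via Theorem \ref{thm_Markov_transfunction_char}, where $\bar\nu:=\Phi\mu\in\mc{M}_\nu$; by the invariance $\int Tf\,d\bar\nu=\int f\,d\mu$ for $f\ge 0$, $T$ extends uniquely to an $L^1$-contraction $L^1(\mu)\to L^1(\bar\nu)$. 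Hence for any $g\in\mc{C}_b(Y)$,
\[
|\langle g,\Phi\lambda-\Phi_n\lambda\rangle|=\Bigl|\int g\,T(h-E_nh)\,d\bar\nu\Bigr|\le \|g\|_\infty\,\|h-E_nh\|_{L^1(\mu)},
\]
so weak convergence reduces to showing $E_nh\to h$ in $L^1(\mu)$ for every $h\in L^1(\mu)$.

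The main obstacle is precisely this $L^1$-convergence. Each $E_n$ is a positive $L^1(\mu)$-contraction because $|\bar h_{n,i}|\alpha_{n,i}\le\int|h|\phi_{n,i}\,d\mu$ and $\sum_i\phi_{n,i}\le 1$, so by a $3\epsilon$-argument it suffices to prove $E_nh\to h$ uniformly on $\supp\mu$ for $h\in\mc{C}_c(X)$, invoking density of $\mc{C}_c(X)$ in $L^1(\mu)$ (available since $\mu$ is a finite regular Borel measure on a Polish space). For such $h$, uniform continuity of $h$ on a compact enlargement of $\supp h\cup\supp\mu$, combined with $\operatorname{diam}(\supp\phi_{n,i})\to 0$, forces $\bar h_{n,i}$ to be within $o(1)$ of the values of $h$ on $\supp\phi_{n,i}$, and hence $\|E_nh - h\|_\infty\to 0$ on $\supp\mu$. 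This delivers $L^1(\mu)$-convergence and completes the argument.
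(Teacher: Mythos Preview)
Your proof is correct and follows a genuinely different route from the paper's.

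The paper works through the plan correspondence: it partitions \emph{both} $X$ and $Y$ by the measurable cells $C_{n,i}$, builds an approximating plan $\kappa_n=\sum_{i,j}\kappa_{n,i,j}$ with each $\kappa_{n,i,j}$ a product measure, proves $\kappa_n\xrightarrow{w}\kappa$ by uniform continuity of test functions on $K_n\times K_n$, and then deduces $\Phi_n\lambda\xrightarrow{w}\Phi\lambda$ by a $3\eps$-argument that replaces $f=d\lambda/d\mu$ by a continuous $\tilde f$ and uses $\|\Phi^\ast\|=\|\Phi_n^\ast\|=1$. Your argument discretizes only the $X$-side with a continuous partition of unity, writes $\Phi_n=\Phi\circ E_n$, and reduces everything to the $L^1(\mu)$-convergence $E_nh\to h$ via the $L^1$-contraction property of the Markov operator $T$. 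The paper's construction buys an explicit matrix representation of $\Phi_n$ (it maps into $\operatorname{span}\{1_{C_{n,j}}\nu\}$) together with weak convergence of the associated plans, which is directly useful for cost estimates in the optimal-transport application. Your construction is more elementary---it avoids the plan machinery entirely---and in fact yields the stronger conclusion $\|\Phi_n\lambda-\Phi\lambda\|\le\|h-E_nh\|_{L^1(\mu)}\to 0$ in total variation, not merely weak convergence; the price is that the output measures $\Phi(\phi_{n,i}\mu)$ are not themselves finitely supported in $\mc{M}_\nu$, so $\Phi_n$ carries no finite-dimensional structure on the $Y$-side.

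One small point worth making explicit in your write-up: after discarding indices with $\alpha_{n,i}=0$, you still have $\sum_i\phi_{n,i}=1$ pointwise on $\supp\mu$ (not just $\mu$-a.e.), because $\alpha_{n,i}=0$ with $\phi_{n,i}$ continuous and non-negative forces $\phi_{n,i}\equiv 0$ on $\supp\mu$; this is what makes the uniform estimate $\|E_nh-h\|_\infty\to 0$ on $\supp\mu$ go through cleanly.
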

\begin{proof}
Assume the measurable setting and consider the approximation of identity $(I_n)$ from Subsection 3.3 with respect to $\mu$. Let $n$ be large so that $K_n$ (from Lemma \ref{lemma_bubbles}) contains the supports of $\mu$ and $\nu$. 

Let $\kappa$ be the plan corresponding to Markov transfunction $\Phi$ from Theorem \ref{thm_Markov_transfunction_char}. For $1 \le i,j \le p(n)$, the quantity $\kappa(C_{n,i} \times C_{n,j})$ represents how much mass transfers from $1_{C_{n,i}} \mu$ to $1_{C_{n,j}} \nu$. If $\mu(C_{n,i}) \nu(C_{n,j}) > 0$, then we can approximate nonzero measure  $(1_{C_{n,i}} \otimes 1_{C_{n,j}}) \kappa$ with 
$$\kappa_{n,i,j} := \kappa(C_{n,i} \times C_{n,j}) \dfrac{1_{C_{n,i}} \mu}{\mu(C_{n,i})} \times \dfrac{1_{C_{n,j}} \nu}{\nu(C_{n,j})}.
$$
Otherwise, we define $\kappa_{n,i,j} := 0$. Then $\kappa_n := \sum_i \sum_j \kappa_{n,i,j}$ is a plan from $\Pi(\mu, \nu)$ which corresponds to a Markov transfunction $\Phi_n$ from Theorem \ref{thm_Markov_transfunction_char}. 

Next, we show that $\kappa_n \xrightarrow{w} \kappa$ as $n \to \infty$. Let $c \in \mc{C}_b(X\times Y)$ with $||c||\le 1$, and for $1 \le i,j \le p(n)$, let $\beta_{n,i,j} := \sup c(C_{n,i} \times C_{n,j}) - \inf c(C_{n,i} \times C_{n,j})$. By uniform continuity of $c$ on $K_n \times K_n$, we have that $\beta_n := \displaystyle \max\{\beta_{n,i,j} | 1 \le i,j \le p(n)\}  \to 0$ as $n \to \infty$, which implies that
$$|\langle c, \kappa - \kappa_n\rangle| \le \sum_i \sum_j \beta_{n,i,j} ~\kappa(C_{n,i} \times C_{n,j}) \le \beta_n ||\kappa|| \to 0.
$$

There are some properties of $\Phi_n$ worth noting: $\Phi_n$ maps $\mc{M}_\mu^\infty$ to  $\text{span}\{1_{C_{n,j}} \nu\}$; $\Phi_n$ behaves as a matrix when applied to $\text{span}\{1_{C_{n,i}} \mu\}$; the structure of $\kappa_n$ guarantees that $\Phi_n = \Phi_n I_n$. If we choose bases $(1_{C_{n,i}} \mu)$ and $(1_{C_{n,j}} \nu)$, the matrix $M_n$ representing $\Phi_n$ has entries $M_n (j,i) := \kappa(C_{n,i} \times C_{n,j}) / \nu(C_{n,j})$.

Let $\lambda \in \mc{M}_\mu^\infty$ and for $1 \le i \le p(n)$. Then
\begin{align*}
\Phi_n \lambda &= \Phi_n I_n \lambda = \sum_j \left\langle \sum_i \dfrac{\kappa(C_{n,i} \times C_{n,j})}{\nu(C_{n,j})} \dfrac{1_{C_{n,i}}}{\mu(C_{n,i})}, \lambda \right\rangle 1_{C_{n,j}} \nu \\
&=\sum_j \left\langle \sum_i ||\kappa_{n,i,j}|| 1_{C_{n,i}}, \lambda \right\rangle 1_{C_{n,j}} \nu,
\end{align*}
showing that $\Phi_n$ is simple.

We now show that $\Phi_n \lambda \xrightarrow{w} \Phi \lambda$ as $n \to \infty$. Let $g \in \mc{C}_b(Y)$ with $||g|| \le 1$. Let $\eps > 0$. Since $\lambda = f \mu$ for some $f \in \mc{L}^\infty(X,\mu)$, choose some $\tilde{f} \in \mc{C}_b(X)$ such that $||(f - \tilde{f}) \mu|| < \eps/3$. Since $||\Phi^\ast|| = ||\Phi_n^\ast||=1$, we have that 
$$|\langle g, \Phi  (f - \tilde{f}) \mu \rangle| = |\langle \Phi^\ast g, (f - \tilde{f}) \mu \rangle| \le ||\Phi^\ast g|| \cdot ||(f - \tilde{f}) \mu||  \le \eps/3,$$
and that
$$|\langle g, \Phi_n  (f - \tilde{f}) \mu \rangle| = |\langle \Phi_n^\ast g, (f - \tilde{f}) \mu \rangle| \le ||\Phi_n^\ast g|| \cdot ||(f - \tilde{f}) \mu|| \le \eps/3.$$
Since $\kappa_n \xrightarrow{w} \kappa$ as $n \to \infty$ and $\tilde{f} \otimes g \in \mc{C}_b (X \times Y)$, there is some natural $N$ so that for all $n \ge N$,
$$|\langle g, (\Phi - \Phi_n) \tilde{f} \mu \rangle| = |\langle \tilde{f} \otimes g , \kappa - \kappa_n \rangle| < \eps/3.$$
It follows from above that $|\langle g, (\Phi - \Phi_n) \lambda \rangle| \le \eps$ for $n \ge N$ by the triangle inequality. 
\end{proof}
Theorem \ref{thm_approx_opt_transfunction} can be strengthened by removing the assumptions that $\mu$ and $\nu$ are compactly supported; the approximation of identity may not capture all of $\mu$ nor $\lambda$, and $\kappa_n \in \Pi(1_{K_n}\mu, 1_{K_n}\nu)$ may not belong to $\Pi(\mu, \nu)$, but the rest of the analysis holds. Notably, to show $\kappa_n \xrightarrow{w} \kappa$ as $n \to \infty$, the inequalities become
$$|\langle c, \kappa - \kappa_n\rangle| \le \kappa(K_n^c) + \sum_i \sum_j \beta_{n,i,j} ~\kappa(C_{n,i} \times C_{n,j}) \le \kappa(K_n^c) + \beta_n ||\kappa_n|| \to 0.
$$

\end{document}